\numberwithin{equation}{section}
\theoremstyle{plain}
\newtheorem{theorem}{Theorem}[section]
\newtheorem{lemma}[theorem]{Lemma}
\newtheorem{corollary}[theorem]{Corollary}
\newtheorem{proposition}[theorem]{Proposition}
\newtheorem{observation}[theorem]{Observation}
\newtheorem{conjecture}[theorem]{Conjecture}
\theoremstyle{definition}
\newtheorem{Def}[theorem]{Definition}
\newtheorem{remark}[theorem]{Remark}
\newtheorem{?}[theorem]{Problem}
\def\Z{\mathbb{Z}}
\def\cO{\mathcal{O}}
\def\cE{\mathcal{E}}
\def\OE{\mathcal{OE}}
\def\EO{\mathcal{EO}}
\def\fEO{\mathfrak{EO}}
\def\fF{\mathfrak{F}}
\def\bOE{\overline{\mathcal{OE}}}
\def\bEO{\overline{\mathcal{EO}}}
\def\bfEO{\overline{\mathfrak{EO}}}
\def\bfOE{\overline{\mathfrak{OE}}}
\def\eoc{\mathrm{eoc}}
\def\fP{\mathfrak{P}}
\def\boxit#1{\leavevmode\hbox{\vrule\vtop{\vbox{\kern.33333pt\hrule
    \kern1pt\hbox{\kern1pt\vbox{#1}\kern1pt}}\kern1pt\hrule}\vrule}}
\begin{document}

\title[Partitions separated by parity]
{Partitions with parts separated by parity: conjugation, congruences and the mock theta functions}

\author[S. Fu]{Shishuo Fu}
\address[Shishuo Fu]{College of Mathematics and Statistics, Chongqing University, Huxi Campus,
Chongqing 401331, P.R. China}
\email{fsshuo@cqu.edu.cn}

\author[D. Tang]{Dazhao Tang}
\address[Dazhao Tang]{School of Mathematical Sciences, Chongqing Normal University,
Chongqing 401331, P.R. China}
\email{dazhaotang@sina.com}

\date{\today}

\begin{abstract}
Noting a curious link between Andrews' even-odd crank and the Stanley rank, we adopt a combinatorial approach
building on the map of conjugation and continue the study of integer partitions with parts separated by
parity. Our motivation is twofold. First off, we derive results for certain restricted partitions with even
parts below odd parts. These include a Franklin-type involution proving a parametrized identity that
generalizes Andrews' bivariate generating function, and two families of Andrews--Beck type congruences.
Secondly, we introduce several new subsets of partitions that are stable (i.e., invariant under conjugation)
and explore their connections with three third order mock theta functions $\omega(q)$, $\nu(q)$, and
$\psi^{(3)}(q)$, introduced by Ramanujan and Watson.
\end{abstract}

\subjclass[2010]{11P81, 11P83, 05A17}

\keywords{Partitions, conjugation, congruences, mock theta functions, parity}

\maketitle

%\tableofcontents

%\tableofcontents

%%%%%%%%%%%%%%%%%%%%%%%%%%%%%%%%%%%%%
\section{Introduction}\label{sec:intro}
%%%%%%%%%%%%%%%%%%%%%%%%%%%%%%%%%%%%%

A \emph{partition} $\lambda=(\lambda_1,\ldots,\lambda_r)$ of a positive integer $n$ is a finite weakly
decreasing sequence of positive integers $\lambda_{1}\geq\lambda_{2}\geq\cdots\geq\lambda_{r}\ge 1$ such that
$\sum_{i=1}^{r}\lambda_{i}=n$, denoted as $\lambda\vdash n$. The $\lambda_{i}$ are called the \emph{parts} of
the partition $\lambda$. As usual, we denote the number being partitioned $n$, and the {\em number of parts}
$r$, as $|\lambda|$ and $\#(\lambda)$, respectively. The partition function $p(n)$ is the number of
partitions of $n$. For the sake of convenience, we denote the empty partition of $0$ as $\epsilon$ and agree
that it is contained in the set of ordinary partitions and various subclasses of restricted partitions. Hence
for example, $p(0)=1$. Unless otherwise noted, we will follow the notations used in \cite{and}.

In 1944, Dyson \cite{Dys} defined the \emph{rank} of a partition as the largest part minus the number of
parts, and then observed that the rank appears to give combinatorial interpretations for the first two (i.e.,
\eqref{cong:mod5} and \eqref{cong:mod7}) of Ramanujan's celebrated partition congruences, namely,
\begin{align}
p(5n+4) &\equiv0\pmod{5},\label{cong:mod5}\\
p(7n+5) &\equiv0\pmod{7},\label{cong:mod7}\\
p(11n+6) &\equiv0\pmod{11}.\label{cong:mod11}
\end{align}
Since the map of conjugation (see Sect.~\ref{sec:pre} for definition) swaps the largest part and the number
of parts between a partition $\lambda$ and its conjugate $\lambda'$, Dyson's rank can be rephrased as
\begin{align*}
\textrm{rank}(\lambda)=\#(\lambda')-\#(\lambda).
\end{align*}

A similar looking partition statistic, the so-called {\em Stanley rank}, was introduced by Stanley \cite{sta}
in his study of sign-balanced, labelled posets (see also \cite{and1, BG1, BG2}), and can be defined as
\begin{align}
\textrm{srank}(\lambda)=\mathcal{O}(\lambda)-\mathcal{O}(\lambda'),
\end{align}
where $\mathcal{O}(\lambda)$ is the number of odd parts in $\lambda$. This statistic also refines Ramanujan's
first congruence \eqref{cong:mod5}; see \cite[Corollary~1.2]{and1}.

Our starting point is an observation that connects the Stanley rank with Andrews' recent work \cite{and2} on
partitions with even parts below odd parts. We need some further definitions before we can state this
observation.

Let $\mathfrak{EO}_n$ denote the set of partitions of $n$ in which every even part is less than each odd
part, and let $\mathcal{EO}(n):=|\mathfrak{EO}_n|$, $\mathfrak{EO}:=\bigcup_{n\ge 0}\mathfrak{EO}_n$. Let
$\overline{\mathfrak{EO}}_n$ denote the set of partitions in $\mathfrak{EO}_n$ in which ONLY the largest even
part, if any, appears an odd number of times, then $\overline{\mathcal{EO}}(n)$ and
$\overline{\mathfrak{EO}}$ can be defined analogously. More generally, if $\mathfrak{P}$ denote a set of
partitions with certain restrictions, then $\mathfrak{P}_n:=\{\lambda\in\fP:|\lambda|=n\}$, and
$\mathcal{P}(n):=|\mathfrak{P}_n|$.

Recall the third order mock theta function due to Ramanujan and Watson (see, for example, \cite{ADY}),
defined by
\begin{align}
\nu(q) &=\sum_{n=0}^{\infty}\frac{q^{n^2+n}}{(-q;q^2)_{n+1}},\label{def:nu}\\
\omega(q) &=\sum_{n=0}^{\infty}\frac{q^{2n^2+2n}}{(q;q^2)_{n+1}^2},\label{def:omega}
\end{align}
where, here and throughout the rest of this paper, we always assume that $q$ is a complex number such that
$|q|<1$ and adopt the following customary abbreviations in partitions and $q$-series:
\begin{align*}
(a;q)_n:=\prod_{j=0}^{n-1}(1-aq^j)\qquad\textrm{and}\qquad(a;q)_\infty:=\prod_{j=0}^\infty(1-aq^j).
\end{align*}
Noting the connection with the third order mock theta function $\nu(q)$, Andrews deduced in \cite{and2}
that
\begin{align}\label{cong:bEO}
\overline{\mathcal{EO}}(10n+8)\equiv0\pmod{5}.
\end{align}
In order to explain \eqref{cong:bEO} combinatorially as the rank does for \eqref{cong:mod5} and
\eqref{cong:mod7}, he went on to introduce the \textit{even-odd crank} for any partition
$\lambda\in\mathfrak{EO}$, to be
\begin{align}\label{eo-crank}
\textrm{eoc}(\lambda)=\textrm{the~largest~even~part~of}~\lambda-\mathcal{O}(\lambda).
\end{align}
Of course, $\textrm{eoc}(\lambda)$ is always an even number when $\lambda\in\overline{\mathfrak{EO}}$. Let
$N_{\textrm{eo}}(k,m,n)$ denote the number of partitions in $\overline{\mathfrak{EO}}_n$ whose even-odd
cranks are congruent to $k$ modulo $m$. Andrews \cite[Theorem~3.3]{and2} proved that for $0\leq i\leq 4$,
\begin{align}
N_{\textrm{eo}}(i,5,10n+8)=\frac{1}{5}\overline{\mathcal{EO}}(10n+8).\label{eq:1/5}
\end{align}

The key observation mentioned earlier is as follows.

\begin{observation}\label{ob:key}
For any partition $\lambda$, $\lambda\in\overline{\mathfrak{EO}}$ if and only if
$\lambda'\in\overline{\mathfrak{EO}}$. In this case, we have
\begin{align}\label{eq:Andrews=Stanley}
\emph{eoc}(\lambda)=\mathcal{O}(\lambda')-\mathcal{O}(\lambda)=-\emph{srank}(\lambda).
\end{align}
\end{observation}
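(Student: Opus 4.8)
The plan is to analyze how conjugation acts on a partition $\lambda \in \overline{\mathfrak{EO}}$ and track what happens to its parts by parity. The key is to understand the structure of such partitions via their Young/Ferrers diagrams. Recall that a partition lies in $\mathfrak{EO}$ precisely when every even part is strictly less than every odd part, so writing $\lambda$ in decreasing order, the odd parts form an initial segment (the larger parts) and the even parts form a terminal segment (the smaller parts). The membership in $\overline{\mathfrak{EO}}$ adds the condition that among the even parts, only the largest even part may repeat an odd number of times, while all other even parts must repeat an even number of times.

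First I would set up the combinatorial dictionary. Conjugation swaps rows and columns of the Ferrers diagram, so the number of parts of $\lambda$ equal to a given value $j$ is the number of columns of height $j$, which equals $\lambda'_j - \lambda'_{j+1}$. The crucial quantity is $\mathcal{O}(\lambda)$, the number of odd parts, which counts the columns of $\lambda'$ having odd height. I would express $\mathcal{O}(\lambda)$ in terms of the boundary differences of $\lambda'$ and, symmetrically, $\mathcal{O}(\lambda')$ in terms of the boundary differences of $\lambda$, so that the combinatorial symmetry becomes transparent. A cleaner route is to read off $\mathcal{O}(\lambda')$ directly as the number of distinct part sizes of $\lambda$ whose multiplicity is odd—this is the standard conjugation fact that the number of odd parts of $\lambda'$ equals the number of part-sizes of $\lambda$ occurring an odd number of times.

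Next I would verify the ``if and only if'' stability claim, namely $\lambda \in \overline{\mathfrak{EO}} \iff \lambda' \in \overline{\mathfrak{EO}}$. Using the structural description above, I would show that the defining conditions translate under conjugation into themselves. Specifically, the condition ``every even part below every odd part'' and the condition ``only the largest even part has odd multiplicity'' should each correspond, via the column/row interchange, to the analogous conditions on $\lambda'$. I expect this to require carefully reading the shape of the diagram near the transition between the odd-part block and the even-part block, and checking that the parity constraints on multiplicities in $\lambda$ become parity constraints on part-sizes in $\lambda'$ that again isolate a single largest even part. This is the step where I anticipate the main obstacle: the largest-even-part condition is delicate, and I will need to confirm that the unique part allowed odd multiplicity on one side maps exactly to the largest even part on the other side, rather than spilling into an odd part or a different even part.

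Finally, assuming stability, I would establish the numerical identity \eqref{eq:Andrews=Stanley}. By definition $\operatorname{srank}(\lambda) = \mathcal{O}(\lambda) - \mathcal{O}(\lambda')$, so the last equality $-\operatorname{srank}(\lambda) = \mathcal{O}(\lambda') - \mathcal{O}(\lambda)$ is immediate. The substantive claim is that $\operatorname{eoc}(\lambda) = \mathcal{O}(\lambda') - \mathcal{O}(\lambda)$, i.e. that the largest even part of $\lambda$ equals $\mathcal{O}(\lambda')$. Here I would invoke the conjugation identity from the previous paragraph: $\mathcal{O}(\lambda')$ counts the part-sizes of $\lambda$ with odd multiplicity. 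Under the $\overline{\mathfrak{EO}}$ restriction, among even parts only the largest even part has odd multiplicity, and I would argue that the odd parts contribute exactly enough to make this count equal the largest even part itself. Concretely, I expect to show that the largest even part of $\lambda$ measures the height of the relevant column of $\lambda'$, which is precisely $\mathcal{O}(\lambda')$, thereby matching $\operatorname{eoc}(\lambda) = \text{largest even part} - \mathcal{O}(\lambda)$ with $\mathcal{O}(\lambda') - \mathcal{O}(\lambda)$. Checking this column-height bookkeeping carefully, together with the largest-even-part subtlety noted above, is where the real work lies; the rest is direct substitution.
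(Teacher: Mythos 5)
There is a genuine gap: the ``standard conjugation fact'' you invoke --- that $\mathcal{O}(\lambda')$ equals the number of part-sizes of $\lambda$ occurring an odd number of times --- is false. For $\lambda=(2,1)$ we have $\lambda'=(2,1)$ and $\mathcal{O}(\lambda')=1$, while both part-sizes $2$ and $1$ occur exactly once, giving a count of $2$. (The correct classical identity is $\mathcal{O}(\lambda')=\lambda_1-\lambda_2+\lambda_3-\cdots$.) Since your final step rests entirely on this lemma, it collapses: for $\lambda\in\overline{\mathfrak{EO}}$ every odd part-size has even multiplicity and among even part-sizes only the largest may have odd multiplicity, so your count would be at most $1$, whereas the largest even part can be arbitrarily large. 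For instance $\lambda=(4,4,4)\in\overline{\mathfrak{EO}}$ has exactly one part-size of odd multiplicity, yet $\lambda'=(3,3,3,3)$ and $\mathcal{O}(\lambda')=4$. The claim that ``the odd parts contribute exactly enough'' therefore cannot be salvaged as written, and the sentence identifying $\mathcal{O}(\lambda')$ with ``the height of the relevant column'' conflates the number of odd-height columns with the height of a single column. The stability step is also only announced, not carried out.

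The statement is of course true, and your plan can be repaired by arguing directly with column heights: writing $2t$ for the largest even part of $\lambda\in\overline{\mathfrak{EO}}$, the $j$-th column of $\lambda$ has height $\#\{i:\lambda_i\ge j\}$; for $j>2t$ only odd parts contribute and each odd value has even multiplicity, so the height is even, while for $j\le 2t$ the height is an even number of odd parts plus an odd count of even parts (the multiplicity of $2t$ being the unique odd summand), hence odd. Thus exactly the first $2t$ columns have odd height, i.e.\ $\mathcal{O}(\lambda')=2t$, from which both stability and \eqref{eq:Andrews=Stanley} follow. The paper takes a slicker route via the profile word $w_\lambda=(e_1,n_1,\ldots,e_k,n_k)$: membership in $\overline{\mathfrak{EO}}$ is equivalent to $w_\lambda$ containing exactly one consecutive odd pair $(n_i,e_{i+1})$ with all remaining letters even, a condition manifestly preserved by the reversal $w_{\lambda'}=(n_k,e_k,\ldots,n_1,e_1)$, and the largest even part $e_1+\cdots+e_i$ is carried by that reversal to $\mathcal{O}(\lambda')$. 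Either of these arguments closes the gap; the one you wrote down does not.
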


This link sheds new light on the study of $\overline{\mathcal{EO}}(n)$, since now we can utilize the symmetry
of $\overline{\mathfrak{EO}}$, imposed by the map of conjugation.

Motivated by \eqref{cong:mod5}, \eqref{cong:mod7} and the statistic rank, Beck proposed some surprisingly
conjectural congruences. Let $NT(k,m,n)$ denote the total number of parts in the partitions of $n$ with rank
congruent to $k$ modulo $m$, Andrews \cite[Theorems 1 and 2]{and4} proved for any $n\geq0$,
\begin{align}
 &\big(NT(1,5,5n+i)-NT(4,5,5n+i)\big)\nonumber\\
 &\quad+2\big(NT(2,5,5n+i)-NT(3,5,5n+i)\big)\equiv0\pmod{5},\quad\textrm{if}
\;\;i=1\;\textrm{or}\;4,\label{AB-cong-mod5}\\
 &\big(NT(1,7,7n+j)-NT(6,7,7n+j)\big)\nonumber\\
 &\quad+\big(NT(2,7,7n+j)-NT(5,7,7n+j)\big)\nonumber\\
 &\quad-\big(NT(3,7,7n+j)-NT(4,7,7n+j)\big)\equiv0\pmod{7},\quad\textrm{if}
\;\;j=1\;\textrm{or}\;5.\label{AB-cong-mod7}
\end{align}
The congruences \eqref{AB-cong-mod5} and \eqref{AB-cong-mod7} are in general called Andrews--Beck type
congruences. Motivated by these two congruences, many authors have recently discovered Andrews--Beck type
congruences for various types of partitions with their associated statistics; see, for example,
\cite{Che1, Che22a, Che22b, CMO, DT22a, DT22b, JLX22, Kim22, LPT21, Mao22a, Mao22b, Mao23, Yao22}. Let
$NT_{\textrm{eo}}(k,m,n)$ denote the total number of odd parts among partitions in
$\overline{\mathfrak{EO}}_n$, whose even-odd cranks are congruent to $k$ modulo $m$. Our first result is the
following Andrews--Beck type congruences. Unlike the methods used in the aforementioned literature, the
proofs of \eqref{AB-cong:10n8} and \eqref{AB-cong:10n} need to take advantage of the map of conjugation and
\eqref{eq:Andrews=Stanley}.

\begin{theorem}\label{THM:AB-cong}
For any $n\geq0$,
\begin{align}
&\big(NT_{\emph{eo}}(1,5,10n)-NT_{\emph{eo}}(4,5,10n)\big)\notag\\
&\quad+2\big(NT_{\emph{eo}}(2,5,10n)-NT_{\emph{eo}}(3,5,10n)\big)\equiv0\pmod{10},\label{AB-cong:10n8}\\
&\big(NT_{\emph{eo}}(1,5,10n+8)-NT_{\emph{eo}}(4,5,10n+8)\big)\notag\\
&\quad+2\big(NT_{\emph{eo}}(2,5,10n+8)-NT_{\emph{eo}}(3,5,10n+8)\big)\equiv0\pmod{20}.\label{AB-cong:10n}
\end{align}
\end{theorem}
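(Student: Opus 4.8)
The plan is to exploit the conjugation symmetry of $\overline{\mathfrak{EO}}$ recorded in Observation~\ref{ob:key} so as to trade the ``number of odd parts'' statistic for a pure even-odd-crank statistic, and only afterwards pass to generating functions. Let $f$ be the odd weight modulo $5$ determined by $f(0)=0$, $f(\pm1)=\pm1$, $f(\pm2)=\pm2$ (so $f(3)=-2$, $f(4)=-1$), and set
\[
T(n):=\sum_{\lambda\in\overline{\mathfrak{EO}}_n}f(\eoc(\lambda))\,\mathcal{O}(\lambda),
\]
where $f(\eoc(\lambda))$ depends only on $\eoc(\lambda)\bmod 5$; this $T(n)$ is exactly the left-hand side of each of \eqref{AB-cong:10n8} and \eqref{AB-cong:10n}. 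By \eqref{eq:Andrews=Stanley}, conjugation is an involution on $\overline{\mathfrak{EO}}_n$ sending $\eoc(\lambda)\mapsto-\eoc(\lambda)$ and $\mathcal{O}(\lambda)\mapsto\mathcal{O}(\lambda')=\mathcal{O}(\lambda)+\eoc(\lambda)$. First I would reindex the defining sum by $\lambda\mapsto\lambda'$; since $f$ is odd modulo $5$ while $\eoc$ is negated, the $\mathcal{O}$-linear parts cancel and I obtain
\[
2\,T(n)=-\sum_{\lambda\in\overline{\mathfrak{EO}}_n}\eoc(\lambda)\,f(\eoc(\lambda)).
\]
The crucial gain is that the right-hand side no longer sees $\mathcal{O}$: it is a weighted count governed entirely by the distribution of the even-odd crank on $\overline{\mathfrak{EO}}_n$.

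Next I would convert this into generating functions. Let $C(z,q)=\sum_{\lambda\in\overline{\mathfrak{EO}}}z^{\eoc(\lambda)}q^{|\lambda|}$ be the even-odd-crank generating function, which admits the explicit Eulerian form
\[
C(z,q)=\frac{1}{(z^{-1}q;q^2)_\infty}\left(1+\sum_{a\geq1}\frac{z^{2a}q^{2a}\,(z^{-1}q;q^2)_a}{(q^4;q^4)_a}\right),
\]
obtained by recording the largest even part (which alone has odd multiplicity, contributing $z^{2a}$ via \eqref{eo-crank}), the smaller even parts (even multiplicities), and the odd parts that exceed them (each contributing $z^{-1}$). Writing $\zeta=e^{2\pi i/5}$ and $\hat f(j)=\sum_{k}f(k)\zeta^{-jk}$, the discrete Fourier expansion of $f$ together with the displayed identity yields
\[
T(n)=-\frac{1}{10}\sum_{j=1}^{4}\hat f(j)\,[q^n]\,\bigl(z\,\partial_z C\bigr)(\zeta^j,q).
\]
Thus both congruences become statements about the coefficients of the $z$-derivative of $C$ at the nontrivial fifth roots of unity, in the arithmetic progressions $10n$ and $10n+8$.

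For the progression $10n+8$ I would feed in Andrews' exact equidistribution \eqref{eq:1/5}, in the equivalent form $[q^{10n+8}]C(\zeta^j,q)=0$ for $j=1,2,3,4$. Splitting the crank into residue classes modulo $10$ and using the equidistribution to play the count in one class off against another, a short bookkeeping shows that the quantity above is already divisible by $10$; the extra factor of $2$ needed to reach the modulus $20$ in \eqref{AB-cong:10n} should come from a parity (i.e.\ $2$-dissection) refinement of the admissible crank values in this progression. For the progression $10n$ no such equidistribution is available, and after the same residue bookkeeping the modulus $10$ in \eqref{AB-cong:10n8} reduces to the single congruence $N_{\eo}(1,5,10n)\equiv N_{\eo}(2,5,10n)\pmod 5$, which I would extract from the $5$-dissection of $C(z,q)$ (equivalently of its $z$-derivative).

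I expect the main obstacle to be precisely this $q$-series analysis in the last step: one must produce the relevant $5$-dissection of the two-variable series $C(z,q)$ and of $z\,\partial_z C(z,q)$ at fifth roots of unity and then control the exact $2$-adic and $5$-adic valuations of the resulting coefficients, since it is this arithmetic precision that separates the modulus $10$ of the $10n$ case from the sharper modulus $20$ of the $10n+8$ case. This is where the connection to the third order mock theta function $\nu(q)$ underlying \eqref{cong:bEO} and \eqref{eq:1/5} is expected to do the genuine work, the conjugation step of the first paragraph having already collapsed the problem from a two-statistic Andrews--Beck sum to a single-statistic crank computation.
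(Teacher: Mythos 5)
Your first paragraph is exactly the paper's opening move: conjugation fixes $\bfEO_n$, negates $\eoc$, and sends $\mathcal{O}(\lambda)$ to $\mathcal{O}(\lambda)+\eoc(\lambda)$ by \eqref{eq:Andrews=Stanley}, so the Andrews--Beck sum $T(n)$ satisfies $2T(n)=-\sum_{\lambda}\eoc(\lambda)f(\eoc(\lambda))$ and the two-statistic problem collapses to a crank computation. Together with $N_{\eo}(r,5,n)=N_{\eo}(5-r,5,n)$ this correctly reduces the modulus-$5$ part to the single identity $N_{\eo}(1,5,\cdot)=N_{\eo}(2,5,\cdot)$ (and the modulus-$2$ part is free, since every partition in $\bfEO$ has an even number of odd parts --- a fact you should state). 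But both places where the theorem has real content are left undone. For the progression $10n$ you need $N_{\eo}(1,5,10n)=N_{\eo}(2,5,10n)$, which you propose to ``extract from the $5$-dissection of $C(z,q)$'' without performing that dissection; the paper simply imports this from Passary's thesis. (Incidentally, your displayed formula for $C(z,q)$ is wrong: the odd parts of a partition in $\bfEO$ occur with \emph{even} multiplicities, so the odd-part factor must be of the shape $1/(q^2/z^2;q^4)_\infty$ rather than $1/(z^{-1}q;q^2)_\infty$; the correct closed form is already \eqref{gf:bEO(m,n)}.)

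The more serious gap is the upgrade from modulus $10$ to modulus $20$ in the $10n+8$ case, which you defer with ``should come from a parity refinement'' and flag as the expected obstacle. The paper's argument here needs two concrete facts that your proposal never identifies and that no root-of-unity bookkeeping on $C(z,q)$ at $\zeta_5$ will produce, since they are $2$-adic rather than $5$-adic. First, $\eoc(\pi)\equiv|\pi|\pmod 4$ for every $\pi\in\bfEO$ (Corollary \ref{coro:eoc-refine}, read off from the decomposition behind Lemma \ref{lem:Ferrersdiag and bEO}), so all cranks in $\bfEO_{10n+8}$ are congruent to $0$ or to $2$ modulo $4$ according to the parity of $n$. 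Second, in the case where they are all $\equiv 2\pmod 4$, one needs $N_{\eo}(1,5,10n+8)=\tfrac15\bEO(10n+8)$ to be \emph{even}, which rests on the pentagonal-number characterization \eqref{cong:bEOparity} of $\bEO(m)\bmod 2$ obtained from self-conjugate partitions. Without these two inputs the factor of $4$ in $NT_{\eo}(1,5,10n+8)-NT_{\eo}(4,5,10n+8)=-\sum\eoc(\lambda)$ is not accounted for, so as written the proposal proves at most the modulus-$10$ statement (and only modulo the unproven $5$-dissection identity in the $10n$ case).
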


In a follow-up paper \cite{and3}, Andrews further studied various types of partitions with parts separated by
parity. Motivated by this work, we also consider some subclasses of partitions with parts separated by
parity. Let $\mathcal{OE}(n)$ denote the number of partitions of $n$ in which each odd part is less than each
even part. Denote by $\overline{\mathcal{OE}}(n)$ the number of partitions counted by $\OE(n)$ in which both
even and odd parts appear, and ONLY the largest odd part and the largest even part appear an odd number of
times, so in particular, $\overline{\mathcal{OE}}(n)>0$ only for odd $n\ge 3$. For example $\bOE(9)=8$, with
the eligible partitions being $(8,1)$, $(6,3)$, $(6,1,1,1)$, $(4,3,1,1)$, $(4,2,2,1)$, $(4,1,1,1,1,1)$,
$(2,2,2,1,1,1)$, and $(2,1,1,1,1,1,1,1)$.

Relating to the Stanley rank, we define $\bOE(m,n)$  to be the number of partitions enumerated by $\bOE(n)$
whose Stanley rank equals $m$ and let
\begin{align*}
\overline{\textrm{OE}}(z,q):=\sum_{\pi\in\overline{\mathfrak{OE}}}z^{\textrm{srank}(\pi)}q^{|\pi|}
=\sum_{n=0}^\infty\sum_{m=-\infty}^{\infty}\overline{\mathcal{OE}}(m,n)z^m q^n
\end{align*}
be the bivariate generating function for $\overline{\mathcal{OE}}(m,n)$.

\begin{theorem}\label{thm:bOE-srank-gf}
We have
\begin{align}\label{gf:boe}
\overline{\emph{OE}}(z,q)=\sum_{m=1}^{\infty}\sum_{n=1}^{\infty}
\dfrac{q^{4mn-1}}{(z^2q^2;q^4)_{m}(q^2/z^2;q^4)_{n}}.
\end{align}
\end{theorem}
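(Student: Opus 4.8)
The plan is to set up an explicit combinatorial parametrization of the partitions counted by $\bOE$ and then track the Stanley rank and size under the map of conjugation. A partition $\pi\in\bfOE$ has both even and odd parts, with exactly the largest odd part and the largest even part occurring an odd number of times and all other parts occurring an even number of times. First I would isolate the ``backbone'' of such a partition: write the largest odd part as $2a-1$ and the largest even part as $2b$, each appearing once (with the remaining odd multiplicity absorbed by pairing), so that $\pi$ decomposes into an odd block lying strictly below the even block (by the $\OE$ condition, odd parts are smaller than even parts). I expect to encode the odd parts by a partition with parts from $\{1,3,5,\dots,2a-1\}$ in which all parts but the top occur evenly, and similarly the even parts from $\{2,4,\dots,2b\}$, and to read off the two indices $m,n$ in \eqref{gf:boe} as counting the distinct part-sizes appearing on each side.

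The heart of the argument is computing $\srank(\pi)=\cO(\pi)-\cO(\pi')$. Since odd parts sit below even parts, the conjugate $\pi'$ re-sorts column lengths, and one must express $\cO(\pi')$ in terms of the staircase data of $\pi$. The clean way is to recognize that, because of the ``only the largest part of each parity is odd in multiplicity'' condition, the Durfee-type dissection forces $\srank$ to be a simple linear form in the two size parameters. Concretely, I would argue that a partition contributing the monomial $z^{\srank}q^{|\pi|}$ in a single summand of \eqref{gf:boe} is built from a rectangular even block of dimensions governed by $n$ and an odd block governed by $m$; the factor $q^{4mn-1}$ should come from the minimal configuration (the ``$-1$'' reflecting that the largest odd part is one less than an even number), while the denominator $(z^2q^2;q^4)_m (q^2/z^2;q^4)_n$ generates the independent choices of adding pairs of parts, each pair contributing $q^4$ to the size and $\pm 2$ to the $z$-exponent depending on whether it lies in the odd or even block.

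The concrete mechanics I would carry out are: (i) fix the two extremal parts and show the minimal such partition has size $4mn-1$ and Stanley rank determined by the initial block, matching the numerator $q^{4mn-1}$ with an appropriate power of $z$; (ii) show that enlarging the odd side by inserting a new pair of equal odd parts of a fresh size multiplies the generating function by a geometric factor $1/(1-z^2q^{2+4k})$, giving the $q$-Pochhammer $(z^2q^2;q^4)_m$ in the denominator, since each added pair of \emph{odd} parts raises $\cO(\pi)$ by $2$ hence $\srank$ by $+2$; (iii) dually, each added pair of \emph{even} parts leaves $\cO(\pi)$ fixed but, after conjugation, raises $\cO(\pi')$ by $2$, hence $\srank$ by $-2$, producing the $(q^2/z^2;q^4)_n$ factor. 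Summing the resulting geometric series over all configurations with exactly $m$ distinct odd sizes and $n$ distinct even sizes yields precisely \eqref{gf:boe}.

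The step I expect to be the main obstacle is (iii): correctly computing how $\cO(\pi')$, rather than $\cO(\pi)$, responds to modifications of the even block of $\pi$. The conjugation does not act locally, so I would need a bookkeeping device — most likely tracking the lengths of the columns in the even region and showing that adding a pair of equal even parts changes the parity count of exactly two conjugate parts. Getting the signs and the ``$-1$'' offset in $q^{4mn-1}$ to come out consistently between the odd and even sides is where the care lies; I would verify the whole setup against the given data $\bOE(9)=8$ by specializing $z=1$ and extracting the coefficient of $q^9$, and against a small bivariate case to pin down the exponent of $z$ in the base term.
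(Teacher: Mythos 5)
Your overall strategy --- dissecting a partition in $\bfOE$ into a minimal core plus two independent families of ``pair insertions'', one worth $z^{+2}$ per pair and one worth $z^{-2}$ per pair --- is exactly the shape of the paper's proof, but two of your concrete identifications are wrong, and the second breaks the argument. First, the indices $m,n$ do not count distinct part-sizes: in the correct dissection $2m-1$ is the largest odd part of $\lambda$ (equivalently the number of even parts of $\lambda'$) and $2n-1$ is the number of even parts of $\lambda$ (equivalently the largest odd part of $\lambda'$). With that choice the core --- a $(2n-1)\times(2m-1)$ rectangle, one horizontal strip of length $2m-1$ (the surviving copy of the largest odd part, weight $zq^{2m-1}$) and one vertical strip of length $2n-1$ (the largest odd column, weight $z^{-1}q^{2n-1}$) --- has weight exactly $q^{4mn-1}$ with no power of $z$. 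Under your ``distinct part-sizes'' reading, the factor $(z^2q^2;q^4)_m^{-1}$ would have to force every size $1,3,\dots,2m-1$ to actually occur, which it does not, and the minimal partition with prescribed largest odd part $2a-1$ and largest even part $2b$ has size $2a+2b-1$, which is not of the form $4mn-1$.

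Second, and more seriously, step (iii) rests on a false claim. Adding a pair of equal even parts $2k,2k$ to $\pi$ increments $\pi'_1,\dots,\pi'_{2k}$ by $2$ each (and creates any new columns in matched pairs), so it changes the parity of no conjugate part: $\cO(\pi')$ is unchanged, $\srank$ moves by $0$ rather than $-2$, and the size grows by $4k$ rather than $4k-2$, so this operation cannot produce $(q^2/z^2;q^4)_n$. The correct dual of your step (ii) is a column operation, not a row operation: append a pair of equal columns of odd length $2k-1\le 2n-1$, i.e.\ add $2$ to each of the $2k-1$ largest parts, twice. This keeps every even part even, adds two odd parts to $\pi'$, hence contributes $z^{-2}q^{4k-2}$, and summing the resulting geometric series over $k=1,\dots,n$ gives exactly $(q^2/z^2;q^4)_n^{-1}$; equivalently, the subpartition to the right of the vertical strip is recorded through its conjugate, a partition into odd parts $\le 2n-1$ each occurring an even number of times. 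You correctly flagged (iii) as the danger point, but the resolution you propose (pairs of even rows) is precisely the one that fails.
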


The rest of this paper is organized as follows. We introduce the pivotal concept of ``stable'' sets of
partitions in Section \ref{sec:pre} (see Definition \ref{def:stable}), after which Observation \ref{ob:key} is
explained, and a complete characterization of the parity of $\bEO(n)$ is rederived with ease. Theorem
\ref{THM:AB-cong} is proved in Section \ref{sec:EO}, where three further stable subsets contained in $\fEO$ and
their connections with mock theta functions are considered as well. In Section \ref{sec:OE}, we prove
Theorem \ref{thm:bOE-srank-gf} and explore further subsets of $\bfOE$. We raise three conjectures to conclude
the paper.

%%%%%%%%%%%%%%%%%%%%%%%%
\section{Preliminaries}\label{sec:pre}
%%%%%%%%%%%%%%%%%%%%%%%%
We set further notations and lay the groundwork in this section.

To each partition $\lambda$, we associate a graphical representation called {\em Ferrers graph}, which is a
left-justified array of unit squares such that the $i$-th row contains $\lambda_i$ squares. Ferrers graph
facilitates the illustration of the following three notions that will appear frequently in this paper (see
Fig.~\ref{ferrers}). Suppose we are given a partition $\lambda=(\lambda_1,\ldots,\lambda_r)$.
\begin{itemize}
\item Conjugation: The {\em conjugate} of $\lambda$ is obtained by transposing its Ferrers graph.
Equivalently, the conjugate partition of $\lambda$ is $\lambda'=(\lambda_1',\ldots,\lambda_{\lambda_1}')$,
where $\lambda_i'=|\{1\le j\le r:\lambda_j\ge i\}|$.

\item Durfee square: The {\em Durfee square} of $\lambda$ is the largest square that can fit into the
top-left corner of its Ferrers graph. If we denote the {\em side of the Durfee square} of $\lambda$ as
$d(\lambda)$, then $d(\lambda)=|\{1\le j\le r:\lambda_j\ge j\}|$. Note that $\lambda$ has a unique
decomposition $\lambda=(d(\lambda);\mu,\nu)$, with $\mu$ and $\nu$ being the subpartition below the Durfee
square of $\lambda$ and the conjugate of the subpartition to the right of the Durfee square, respectively.
Clearly $|\lambda|=d^2+|\mu|+|\nu|$.

\item Profile: The {\em profile} of a nonempty partition $\lambda$ is the sequence of southmost and
eastmost border edges in its Ferrers graph, that starts with an east edge and ends with a north edge. We
use the {\em profile word} to record the profile of $\lambda$: $w_{\lambda}=(e_1,n_1,\ldots,e_k,n_k)$,
where for each $1\le i\le k\le \min\{r,\lambda_1\}$, $e_i$ and $n_i$ are the numbers of consecutive east
and north edges appearing alternately in the profile, respectively. It is worth noting that Dyson's rank
can be rephrased as $\textrm{rank}(\lambda)=\sum_i e_i-\sum_i n_i$.
\end{itemize}

\begin{figure}[htp]
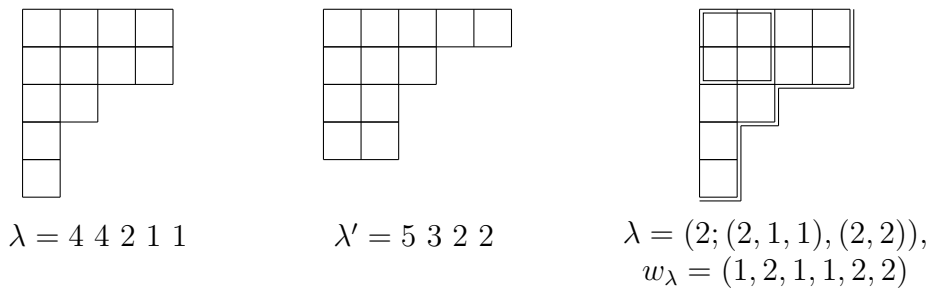

\begin{ferrers}
\addcellrows{4+4+2+1+1}
\addtext{1}{-3}{$\lambda=4~4~2~1~1$}
\setbase{4}{0}
\addcellrows{5+3+2+2}
\addtext{1.2}{-3}{$\lambda'=5~3~2~2$}
\setbase{9}{0}
\addcellrows{4+4+2+1+1}
\addline{0}{-2.55}{0.55}{-2.55}
\addline{0.55}{-1.55}{0.55}{-2.55}
\addline{0.55}{-1.55}{1.05}{-1.55}
\addline{1.05}{-1.55}{1.05}{-1.05}
\addline{1.05}{-1.05}{2.05}{-1.05}
\addline{2.05}{-1.05}{2.05}{0}
\highlightdurfeesquarebycell{2}
\addtext{1}{-3}{$\lambda=(2;(2,1,1),(2,2))$,}
\addtext{1}{-3.5}{$w_{\lambda}=(1,2,1,1,2,2)$}
\end{ferrers}
\caption{Ferrers graph, conjugation, Durfee square and profile}
\label{ferrers}
\end{figure}

Note that both the conjugation and the Durfee square are standard notions that have been widely applied in
the study of partition theory \cite{and}. While the partition profile, whose definition follows \cite{KN},
was involved in our previous work \cite{FT}; see also \cite{Fu22, LXY22} for recent works demonstrating the
usefulness of this alternative way of recording a partition. A closely related notion, the {\em rim hook},
makes an appearance in the famed Murnaghan--Nakayama rule (see, for example, \cite[Theorem~4.10.2]{sag})
and is crucial in the study of symmetric functions in general.

As a standard tool in the realm of enumerative combinatorics, the theory of enumeration under group action,
or \textit{P\'olya theory}, seems to be lack of application in the study of integer partitions.
Nonetheless, one notable and enlightening exception was the paper by Garvan, Kim and Stanton \cite{GKS},
in which they utilized three dihedral groups' actions on the partitions of $5n+4, 7n+5$, and $11n+6$, to
give a uniform proof of all three Ramanujan congruences \eqref{cong:mod5}, \eqref{cong:mod7} and
\eqref{cong:mod11}; see also Hirschhorn's paper \cite{hir} and book \cite[Chap.~4]{hirb} for a more direct
version of their proof. We would also like to mention the recent work of Kim \cite{kim}, in which a
$\Z_2$-action similar to conjugation was discussed and interesting congruences were derived.

Motivated by these previous works and our Observation \ref{ob:key}, we view the map of conjugation as a
$\Z_2$-action on the set of partitions. From this perspective, it is natural to consider those subsets that
are invariant under this group action. We give a name to such subsets.

\begin{Def}\label{def:stable}
A collection of partitions, say $\mathfrak{C}$, is said to be {\em (conjugational) stable}, if the
following holds true:
\begin{align*}
\lambda\in\mathfrak{C}\quad\textrm{if~and~only~if}\quad\lambda'\in\mathfrak{C}.
\end{align*}
\end{Def}

For instance, we state without proof in Observation \ref{ob:key} that $\overline{\mathfrak{EO}}$ is
stable. To see this, simply note that for any $\lambda\in\overline{\mathfrak{EO}}$, its profile word $w_{\lambda}=(e_1,n_1,\ldots,e_k,n_k)$ must contain a consecutive pair of odd numbers $(n_i,e_{i+1})$ for
some $0\le i\le k$ (set $n_0=e_{k+1}=1$ by convention), and all remaining letters in $w_{\lambda}$ are even.
This property is clearly seen to be preserved by the reversal, thus $\lambda'\in\overline{\mathfrak{EO}}$ as
well (see Proposition~\ref{prop:conj and rev} below). Furthermore, we see that the largest even part of
$\lambda$ is given by $e_1+e_2+\cdots+e_i$, which, upon reversal, outputs exactly $\cO(\lambda')$, the
number of odd parts in $\lambda'$, hence \eqref{eq:Andrews=Stanley} follows and Observation \ref{ob:key}
holds true.

From now on, we will use stability as a guideline for discovering new subclasses of partitions with parts
separated by parity. For any class of partitions $\mathfrak{C}$, we use $\mathfrak{C}_{c}$ to denote the
subset of partitions $\lambda\in\mathfrak{C}$ that are {\em self-conjugate}, i.e., $\lambda=\lambda'$. The
following two facts are simple but useful.

\begin{proposition}\label{prop:conj and rev}
For a partition $\lambda$ with its profile word $w_{\lambda}=(e_1,n_1,\ldots,e_k,n_k)$, the profile word for
its conjugate is obtained from reversing $w_{\lambda}$, i.e.,
\begin{align*}
w_{\lambda'}=(n_k,e_k,\ldots,n_1,e_1).
\end{align*}
In particular, $\lambda$ is self-conjugate if and only if $w_{\lambda}$ is palindromic, i.e., $e_1=n_k$,
$n_1=e_k,\ldots$.
\end{proposition}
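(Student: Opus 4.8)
The plan is to realize conjugation as a genuine geometric reflection of the Ferrers graph and to track its effect on the profile, read as a lattice path. First I would encode the profile of a nonempty $\lambda$ as a word over the alphabet $\{E,N\}$: traversing the south--east border from the bottom-left corner of the graph to its top-right corner, record an $E$ for each east (horizontal) unit edge and an $N$ for each north (vertical) unit edge. Grouping this word into maximal runs recovers exactly $w_\lambda=(e_1,n_1,\ldots,e_k,n_k)$, so it suffices to understand how conjugation transforms the $E/N$-word.

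Next I would fix coordinates with the $x$-axis pointing right and the $y$-axis pointing down, placing the cell in row $i$ and column $j$ of $\lambda$ on the unit square $[j-1,j]\times[i-1,i]$. With this convention the reflection $(x,y)\mapsto(y,x)$ across the main diagonal sends the cell $(i,j)$ to the cell $(j,i)$, and hence carries the Ferrers graph of $\lambda$ precisely onto that of $\lambda'$. Since the profile is an intrinsic feature of the filled region---it is the staircase bounding the graph on its south and east sides---the reflection must carry the profile of $\lambda$ onto the profile of $\lambda'$. The key bookkeeping is then to record two things about this reflection: it interchanges horizontal and vertical unit edges (that is, it swaps $E$ with $N$), and it swaps the two endpoints of the path, sending the bottom-left corner $(0,\#(\lambda))$ of $\lambda$ to the point $(\#(\lambda),0)$, which is exactly the top-right corner of the graph of $\lambda'$ since $\lambda'_1=\#(\lambda)$, and symmetrically sending the top-right corner $(\lambda_1,0)$ of $\lambda$ to the bottom-left corner of $\lambda'$. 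Consequently the reflection reverses the direction in which the boundary is read.

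Combining these observations, the $E/N$-word of $\lambda'$ is obtained from that of $\lambda$ by reversing the sequence and simultaneously interchanging $E$ and $N$; translating back to maximal runs, a run $E^{e_i}$ becomes a run $N^{e_i}$ placed in reversed order, which yields $w_{\lambda'}=(n_k,e_k,\ldots,n_1,e_1)$ as claimed, in agreement with the running example of Fig.~\ref{ferrers} where $w_\lambda=(1,2,1,1,2,2)$ reverses to $w_{\lambda'}=(2,2,1,1,2,1)$. I expect the only delicate point to be the orientation bookkeeping in the previous step---confirming that the reflection genuinely reverses the traversal rather than merely relabeling edges---which is why I would pin down the images of the two endpoints explicitly. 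Finally, since a partition is uniquely recovered from its profile, we have $\lambda=\lambda'$ if and only if $w_\lambda=w_{\lambda'}$, and by the displayed formula the latter equality is precisely the palindromic condition $e_1=n_k$, $n_1=e_k,\ldots$, which settles the final assertion.
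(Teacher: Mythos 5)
Your argument is correct, and it is exactly the reasoning the paper leaves implicit: Proposition~\ref{prop:conj and rev} is stated there as a ``simple but useful'' fact with no written proof, the intended justification being precisely that conjugation is the diagonal reflection of the Ferrers graph, which swaps east and north edges and reverses the traversal of the border path. Your careful bookkeeping of the two endpoints and of the run-length encoding fills in that omitted argument correctly, including the palindromic criterion for self-conjugacy.
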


\begin{proposition}\label{Prop:conj}
Let $\mathcal{C}(n):=|\{\lambda\in\mathfrak{C}:|\lambda|=n\}|$ and
$\mathcal{C}_c(n):=|\{\lambda\in\mathfrak{C}_c:|\lambda|=n\}|$. Then
\begin{align}\label{cong:sc-parity}
\mathcal{C}(n)\equiv\mathcal{C}_c(n)\pmod 2.
\end{align}
\end{proposition}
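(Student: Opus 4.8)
The plan is to exploit conjugation as an involution and invoke the standard fixed-point parity principle. The first thing I would record is that transposing a Ferrers graph neither creates nor destroys cells, so $|\lambda'|=|\lambda|$ for every partition $\lambda$; consequently conjugation maps the size-$n$ layer $\{\lambda\in\mathfrak{C}:|\lambda|=n\}$ into the set of partitions of $n$. Provided $\mathfrak{C}$ is stable in the sense of Definition \ref{def:stable}, so that $\lambda\in\mathfrak{C}$ exactly when $\lambda'\in\mathfrak{C}$, the assignment $\lambda\mapsto\lambda'$ is therefore a well-defined map from this size-$n$ layer to itself.

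Next I would observe that this map is an involution, since $(\lambda')'=\lambda$ (transposing twice returns the original graph). An involution on a finite set partitions that set into fixed points together with two-element orbits $\{\lambda,\lambda'\}$ satisfying $\lambda\neq\lambda'$. The fixed points are precisely the self-conjugate members enumerated by $\mathcal{C}_c(n)$, while each remaining element lies in a two-element orbit. Writing $t$ for the number of such orbits, I would conclude $\mathcal{C}(n)=2t+\mathcal{C}_c(n)$, and reducing modulo $2$ yields \eqref{cong:sc-parity}.

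The argument has essentially no obstacle once stability is in hand, so the step deserving the most care is purely structural: one must ensure that conjugation genuinely returns $\{\lambda\in\mathfrak{C}:|\lambda|=n\}$ to itself, which is exactly the conjunction of the size-preservation noted above with the closure supplied by Definition \ref{def:stable}. I would flag that stability is the indispensable hypothesis here, since for a collection not closed under conjugation the pairing of non-self-conjugate partitions breaks down and the congruence can fail; accordingly, \textbf{I read Proposition~\ref{Prop:conj} as tacitly assuming $\mathfrak{C}$ is stable}, and I would either state this assumption explicitly or remark that it is inherited from the running convention established after Definition~\ref{def:stable}.
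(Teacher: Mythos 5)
Your proof is correct and is precisely the standard involution/fixed-point argument that the paper leaves unstated (the authors present Proposition~\ref{Prop:conj} without proof as a ``simple but useful'' fact). Your observation that stability of $\mathfrak{C}$ is a tacit but indispensable hypothesis is also right, and is consistent with the paper's surrounding context, where the proposition is only ever applied to stable classes.
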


Based on Proposition \ref{Prop:conj}, we give another proof of the following complete characterization for
the parity of $\overline{\mathcal{EO}}(n)$, first derived by Passary \cite[Eq. (2.1.24)]{Pas}.

\begin{corollary}\label{cor:bEOparity}
For any $n\ge 0$,
\begin{align}\label{cong:bEOparity}
\overline{\mathcal{EO}}(n) \equiv \begin{cases}
1 \pmod 2, & \text{if $n=4k(3k-1)$}~\textrm{for~some~}$k$,\\
0 \pmod 2, & \text{otherwise.}
\end{cases}
\end{align}
\end{corollary}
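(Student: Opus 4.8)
The plan is to reduce everything to self-conjugate partitions and then identify their generating function. Applying Proposition~\ref{Prop:conj} to the (stable) class $\fC=\bfEO$, we get $\bEO(n)\equiv\bEO_c(n)\pmod 2$, where $\bEO_c(n)$ counts the self-conjugate $\lambda\in\bfEO_n$. Hence it suffices to prove that $\bEO_c(n)$ is odd exactly when $n=4k(3k-1)$.

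The next step is to pin down the self-conjugate members of $\bfEO$. By Proposition~\ref{prop:conj and rev}, $\lambda=\lambda'$ is equivalent to $w_\lambda$ being palindromic, while the structural description of $\bfEO$-profiles recalled after Definition~\ref{def:stable} says $w_\lambda$ carries exactly one consecutive pair of odd letters, all others being even. Matching the two constraints forces the lone odd pair to straddle the centre of $w_\lambda$ (so the all-odd and all-even profiles yield nothing beyond $\epsilon$), whence
\[
w_\lambda=(e_1,n_1,\ldots,e_t,n_t,\,n_t,e_t,\ldots,n_1,e_1)
\]
is determined by its first half, in which $e_1,\ldots,e_t$ and $n_1,\ldots,n_{t-1}$ are even while $n_t$ is odd. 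Reading off the principal (Durfee) hook lengths, I claim each such $\lambda$ is encoded by its set of distinct odd hooks, which is precisely a disjoint union of consecutive pairs $\{4c+3,4c+5\}$ with $c\ge 0$, and conversely every choice of such pairs occurs. Since $(4c+3)+(4c+5)=8(c+1)$, the weight $|\lambda|$ equals $8$ times the sum of the chosen indices $c+1$, which range over an arbitrary finite set of distinct positive integers.

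Granting this bijection, the self-conjugate generating function collapses to
\[
\sum_{n\ge0}\bEO_c(n)\,q^n=\prod_{m\ge1}\bigl(1+q^{8m}\bigr),
\]
the distinct-part generating function with each part inflated by $8$; I would cross-check it on small cases (e.g.\ $\bEO_c(40)=3$, matching $\{5\},\{1,4\},\{2,3\}$). Reducing modulo $2$ and invoking Euler's pentagonal number theorem,
\[
\prod_{m\ge1}\bigl(1+q^{8m}\bigr)\equiv\prod_{m\ge1}\bigl(1-q^{8m}\bigr)=\sum_{k=-\infty}^{\infty}(-1)^k q^{4k(3k-1)}\equiv\sum_{k=-\infty}^{\infty}q^{4k(3k-1)}\pmod 2,
\]
so $\bEO_c(n)$ is odd exactly for $n=4k(3k-1)$, and combining with the first paragraph gives \eqref{cong:bEOparity}.

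The main obstacle is the middle step: rigorously showing that self-conjugacy together with the $\bfEO$-condition forces the principal hooks to split into the pairs $\{4c+3,4c+5\}$ (and that all such configurations are attained), together with the weight identity $|\lambda|=8\sum(c+1)$. This requires converting the palindromic profile data $(e_i,n_i)$ into hook lengths and simplifying the resulting quadratic area expression; the parity bookkeeping on the $e_i$ and $n_i$ is the delicate part, but once the pairing is established the remaining manipulations are routine.
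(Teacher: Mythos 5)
Your overall strategy coincides with the paper's: both proofs invoke Proposition~\ref{Prop:conj} to reduce \eqref{cong:bEOparity} to the parity of $\bEO_c(n)$, both identify $\sum_n\bEO_c(n)q^n$ with $(-q^8;q^8)_\infty=\prod_{m\ge1}(1+q^{8m})$, and both finish by reducing modulo $2$ to $(q^8;q^8)_\infty$ and applying the pentagonal number theorem. The only divergence is how the self-conjugate members of $\bfEO$ are parametrized. The paper writes each such $\lambda$ as $(2m;\mu,\mu)$ via its Durfee square, translates the $\bfEO$ conditions into conditions on $\mu$ (largest part $2m$ with odd multiplicity, all other parts even with even multiplicity), and then needs Euler's identity to convert the resulting sum $\sum_m q^{4m^2+4m}/(q^8;q^8)_m$ into the product. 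You instead use the classical bijection sending a self-conjugate partition to its set of principal hooks, and claim that membership in $\bfEO$ is equivalent to that set being a disjoint union of pairs $\{4c+3,4c+5\}$; this buys you the product form $\prod(1+q^{8m})$ directly, with no $q$-series identity needed. Your claim is in fact correct (e.g.\ hooks $\{3,5\}$ give $(3,3,2)$, $\{7,9\}$ give $(5,5,2,2,2)$, $\{3,5,7,9\}$ give $(5,5,5,5,4)$, while $\{5,7\}$ gives $(4,4,2,2)$, which fails the odd-multiplicity condition, and $\{3,7\}$ gives $(4,3,2,1)\notin\fEO$), and it is equivalent to the paper's Durfee description.

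The one substantive issue is the one you flag yourself: the hook-pairing characterization is asserted, not proved, and it is precisely the nontrivial combinatorial content of the argument. Checking it on small cases does not discharge it, and the ``parity bookkeeping'' is genuinely where the $\bfEO$ conditions enter. If you want to keep the hook route, the cleanest way to close the gap is to note that for a palindromic profile word $(e_1,n_1,\ldots,e_t,n_t,n_t,e_t,\ldots,n_1,e_1)$ with only $n_t$ odd, the $j$-th principal hook equals $2\big(\sum_{i\ge \text{(appropriate range)}}(e_i+n_i)\big)-1$, so consecutive hooks within a run governed by a single profile letter differ by $2$ and the parity pattern $1,3,1,3,\ldots \pmod 4$ forces the pairing; alternatively, simply adopt the paper's Durfee-square decomposition, where the translation of the $\bfEO$ conditions is immediate and Euler's identity $\sum_{m\ge0}q^{4m^2+4m}/(q^8;q^8)_m=(-q^8;q^8)_\infty$ supplies the same product. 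Either way the endgame you describe is exactly right.
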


\begin{proof}
Let $\overline{\mathcal{EO}}_c(n)$ denote the number of self-conjugate partitions in
$\overline{\mathfrak{EO}}_n$, then by \eqref{cong:sc-parity} we have $\overline{\mathcal{EO}}(n)\equiv\overline{\mathcal{EO}}_c(n)\pmod2$.

Now each self-conjugate partition $\lambda\in\overline{\mathfrak{EO}}_n$ can be decomposed as
$\lambda=(2m;\mu,\mu)$, where $2m$ is the side of $\lambda$'s Durfee square, and $\mu$ is the subpartition
below the Durfee square, which has an odd number of largest part $2m$, with the remaining parts all being
even and all occurring an even number of times. Together with another copy of $\mu$ to the right of the
Durfee square, they are generated by $(q^{2m}\cdot q^{2m})/(q^8;q^8)_{m}$. This amounts to give the
generating function
\begin{align}
\sum_{n=0}^{\infty}\overline{\mathcal{EO}}_c(n)q^n
 &=\sum_{m=0}^{\infty}\frac{q^{(2m)^2+4m}}{(q^8;q^8)_m}\notag\\
 &=(-q^8;q^8)_{\infty}\label{q-bin-iden}\\
 &\equiv (q^8;q^8)_{\infty}\pmod 2\notag\\
 &=\sum_{m=-\infty}^{\infty}(-1)^m q^{4m(3m-1)},\label{PNT}
\end{align}
where \eqref{q-bin-iden} and \eqref{PNT} follow from Euler's identity \cite[p.~19, Eq.~(2.2.6)]{and} and
Euler's pentagonal number theorem \cite[p.~11, Corollary 1.7]{and}, respectively. Now \eqref{cong:bEOparity}
follows by comparing the coefficients.
\end{proof}

\begin{remark}
Two remarks on Corollary \ref{cor:bEOparity} are in order. For one thing, in a recent paper of Ray and
Barman \cite{RB}, using the theory of modular forms, they derived, among some other things, an infinite
family of congruences modulo $2$ for $\overline{\mathcal{EO}}(n)$ (see \cite[Theorem 1.1]{RB}), and a result
concerning the parity of $\overline{\mathcal{EO}}(n)$ in any arithmetic progression \cite[Theorem 1.4]{RB}.
For another, the identities \eqref{q-bin-iden} and \eqref{PNT} both possess classical combinatorial proofs.
From this perspective, our proof is a combinatorial proof.
\end{remark}

%%%%%%%%%%%%%%%%%%%%%%%%
\section{Stable sets in \texorpdfstring{$\mathfrak{EO}$}{EO}}\label{sec:EO}
%%%%%%%%%%%%%%%%%%%%%%%%

Equipped with the combinatorial insights from section~\ref{sec:pre}, we investigate in this section various
stable subsets contained in $\fEO$.

\subsection{Further results for \texorpdfstring{$\bfEO$}{overlined-EO}}
We begin by answering a problem raised by Andrews in \cite{and2}:

``Problem 2. Prove Proposition 3.1 combinatorially. (Hopefully more directly than invoking [1].)''

The Proposition 3.1 referred to above is the following generating function of $\overline{\mathcal{EO}}(m,n)$,
the number of partitions in $\overline{\mathfrak{EO}}_n$ whose even-odd crank equals $m$.
\begin{align}\label{gf:bEO(m,n)}
\sum_{n=0}^{\infty}\sum_{m=-\infty}^{\infty}\bEO(m,n)z^m q^n
=\dfrac{(q^4;q^4)_\infty}{(z^2q^2;q^4)_\infty(q^2/z^2;q^4)_\infty}.
\end{align}

We construct an involution on a certain set of $2$-colored partition pairs to establish Theorem
\ref{thm:2-color}, which is a bivariate generalization of \eqref{gf:bEO(m,n)}. This approach is reminiscent
of Franklin's proof of Euler's pentagonal number theorem. We need to make some further definitions.

We consider {\em $2$-colored partitions}, wherein each part of size $k$ can be colored as either $b$ or $ab$
(when $k\ge 2$), such that $ab$-colored parts are all distinct. We remark that this notion of $2$-colored
partition is essentially Corteel and Lovejoy's \textit{overpartition} \cite{CL}, probably appended with parts
of zero (nonoverlined), but we introduce it this way for our convenience.

Using its Ferrers graph, we realize every $2$-colored partition by assigning the neutral weight $q$, and
color weights $b$ or $ab$, such that the leftmost cell of each part is always filled with $b$, while the
rightmost cell of each $ab$-colored part is filled with $a$, with the remaining cells all filled with $q$. We
call such cell-labelled Ferrers graph a \textit{Ferrers diagram}. The weight for each Ferrers diagram $\mu$
is the product over all the weights of its cells, and is denoted as $w(\mu)$; see Fig.~\ref{2-color} for an
example. Note that when there are two parts with the same size but different color, we always put the
$ab$-colored one below the $b$-colored one. Consequently, each cell filled with $a$ must be an outer corner
cell. Denote the set of all Ferrers diagrams as $\mathfrak{F}(a,b;q)$. From now on, we will speak of Ferrers
diagram and $2$-colored partition interchangeably. It is now routine to find its generating function.

\begin{proposition}\label{prop:gf-Ferdia}
The weight generating function of Ferrers diagrams is given by
\begin{align*}
\sum_{\mu\in\mathfrak{F}(a,b;q)}w(\mu) = \frac{(-ab;q)_{\infty}}{(b;q)_{\infty}}.
\end{align*}
\end{proposition}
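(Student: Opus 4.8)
The plan is to compute the weight generating function directly by summing over all Ferrers diagrams, organized by part size. Recall that a $2$-colored partition is built from parts of various sizes, where each part of size $k$ carries a color ($b$ or, when $k\ge 2$, $ab$), the $ab$-colored parts being distinct. Since parts of distinct sizes contribute independently, the total generating function factors as a product over all admissible part sizes $k\ge 1$:
\begin{align*}
\sum_{\mu\in\mathfrak{F}(a,b;q)}w(\mu)=\prod_{k\ge 1}\Big(\textrm{local generating function for parts of size }k\Big).
\end{align*}
First I would pin down the weight that a single part of size $k$ contributes under the cell-labelling rule. A $b$-colored part of size $k$ has its leftmost cell filled with $b$ and the remaining $k-1$ cells filled with $q$, so its weight is $bq^{k-1}$. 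An $ab$-colored part of size $k$ (which requires $k\ge 2$) has leftmost cell $b$, rightmost cell $a$, and $k-2$ interior cells filled with $q$, giving weight $abq^{k-2}$.

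Next I would assemble the local factor for each fixed size $k$. The $b$-colored parts of size $k$ may be repeated any number of times (including zero), each contributing $bq^{k-1}$, so they generate the geometric series $1/(1-bq^{k-1})$. The $ab$-colored part of size $k$ is distinct, hence appears zero or one time, contributing the factor $(1+abq^{k-2})$ when $k\ge 2$, and no such factor when $k=1$. Multiplying, the local factor is $(1+abq^{k-2})/(1-bq^{k-1})$ for $k\ge 2$, and $1/(1-b)$ for $k=1$. Taking the product over all $k\ge 1$ and reindexing, the denominators yield $\prod_{k\ge 1}(1-bq^{k-1})=(b;q)_{\infty}$, while the numerators yield $\prod_{k\ge 2}(1+abq^{k-2})=\prod_{j\ge 0}(1+abq^{j})=(-ab;q)_{\infty}$. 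This gives exactly
\begin{align*}
\sum_{\mu\in\mathfrak{F}(a,b;q)}w(\mu)=\frac{(-ab;q)_{\infty}}{(b;q)_{\infty}},
\end{align*}
as claimed.

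I do not expect a serious obstacle here, since this is a routine factorization of a product generating function; the only point requiring genuine care is verifying that the cell-labelling convention is correctly translated into the monomial weights $bq^{k-1}$ and $abq^{k-2}$, and in particular confirming the edge case $k=1$, where no $ab$-colored part is permitted so that the numerator product legitimately starts at $j=0$ (equivalently size $k=2$). I would double-check the convention that the $ab$-colored part of a given size sits below the $b$-colored copy, ensuring each $a$-cell is an outer corner as stated, so that the weights are unambiguous; once this bookkeeping is settled, the identification with $(-ab;q)_{\infty}/(b;q)_{\infty}$ is immediate.
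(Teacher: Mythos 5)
Your proof is correct: the weights $bq^{k-1}$ and $abq^{k-2}$ match the cell-labelling convention (as one can check against the example $w(\mu)=a^2b^7q^{20}$ in Fig.~2), and the product factorization over part sizes is exactly the routine argument the paper has in mind, since it states this proposition without proof. Nothing further is needed.
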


\begin{figure}[t!]
\begin{ferrers}
\addcellrows{7+6+6+3+3+3+1}
\highlightcellbyletter{1}{1}{$b$}
\highlightcellbyletter{2}{1}{$b$}
\highlightcellbyletter{3}{1}{$b$}
\highlightcellbyletter{4}{1}{$b$}
\highlightcellbyletter{5}{1}{$b$}
\highlightcellbyletter{6}{1}{$b$}
\highlightcellbyletter{7}{1}{$b$}
\highlightcellbyletter{1}{7}{$a$}
\highlightcellbyletter{6}{3}{$a$}
\highlightcellbyletter{1}{2}{$q$}
\highlightcellbyletter{1}{3}{$q$}
\highlightcellbyletter{1}{4}{$q$}
\highlightcellbyletter{1}{5}{$q$}
\highlightcellbyletter{1}{6}{$q$}
\highlightcellbyletter{2}{2}{$q$}
\highlightcellbyletter{2}{3}{$q$}
\highlightcellbyletter{2}{4}{$q$}
\highlightcellbyletter{2}{5}{$q$}
\highlightcellbyletter{2}{6}{$q$}
\highlightcellbyletter{3}{2}{$q$}
\highlightcellbyletter{3}{3}{$q$}
\highlightcellbyletter{3}{4}{$q$}
\highlightcellbyletter{3}{5}{$q$}
\highlightcellbyletter{3}{6}{$q$}
\highlightcellbyletter{4}{2}{$q$}
\highlightcellbyletter{4}{3}{$q$}
\highlightcellbyletter{5}{2}{$q$}
\highlightcellbyletter{5}{3}{$q$}
\highlightcellbyletter{6}{2}{$q$}
\end{ferrers}
\caption{Ferrers diagram for $\mu$ with $w(\mu)=a^2b^7q^{20}$}
\label{2-color}
\end{figure}

Clearly $\mathfrak{F}(0,q;q)$ is in bijection with the set of ordinary partitions. Now define a peculiar set
of $2$-colored partition pairs
\begin{align*}
\fF(a,b,c,d;q):=\{(\lambda,\mu)\in\fF(a,b;q)\times\fF(c,d;q):
\textrm{$\lambda=\epsilon$~or~the~smallest~part~of~$\lambda$}>\#(\mu)\}.
\end{align*}

\begin{lemma}\label{lem:Ferrersdiag and bEO}
There exists a bijection
\begin{align*}
\varphi: \fF(0,q^2/z^2,0,z^2q^2;q^4)\to \bfEO,
\end{align*}
such that a pair of Ferrers diagrams $(\lambda,\mu)\in \fF(0,q^2/z^2,0,z^2q^2;q^4)$ weighted by
$w(\lambda)w(\mu)=z^mq^n$ corresponds to a partition counted by $\overline{\mathcal{EO}}(m,n)$.
\end{lemma}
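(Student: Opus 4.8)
The plan is to construct $\varphi^{-1}$ by splitting a partition $\pi\in\bfEO$ into its even parts and its odd parts and encoding the two pieces separately. By the definition of $\bfEO$, if $2L$ denotes the largest even part of $\pi$ then $2L$ occurs an odd number of times while \emph{every} other part---in particular every odd part---occurs an even number of times; hence $\cO(\pi)$ is even and $\eoc(\pi)=2L-\cO(\pi)$. First I would treat the odd parts. They form a multiset of odd integers, each exceeding $2L$ because $\pi\in\fEO$, in which every value has even multiplicity; halving all the multiplicities gives a partition into odd parts, to which I apply the map $v\mapsto(v+1)/2$ part-by-part to obtain $\lambda$. Every part of $\lambda$ is then at least $L+1$. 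Recalling that a part of size $k$ in $\fF(0,q^2/z^2;q^4)$ carries weight $z^{-2}q^{4k-2}$ (cf.\ Proposition~\ref{prop:gf-Ferdia}), a short computation gives $\#(\lambda)=\cO(\pi)/2$ and $w(\lambda)=z^{-\cO(\pi)}q^{s}$, where $s$ is the sum of the odd parts of $\pi$.

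The even parts are the delicate step, and I expect this to be the main obstacle. Let $P$ be the sub-partition of even parts and set $Q=P/2$; then $Q$ has largest part $L$, with $L$ occurring an odd number of times and each smaller part an even number of times. The key observation is that the conjugate $Q'$ then has \emph{exactly} $L$ parts, all of them odd: the multiplicity of $j$ in $Q$ equals $Q'_j-Q'_{j+1}$, which is odd when $j=L$ and even when $j<L$, so every $Q'_j$ is odd. Applying the same map $v\mapsto(v+1)/2$ to the parts of $Q'$ produces $\mu$, a partition with exactly $L$ parts; since in $\fF(0,z^2q^2;q^4)$ a part of size $l$ carries weight $z^{2}q^{4l-2}$, one finds $\#(\mu)=L$ and $w(\mu)=z^{2L}q^{|P|}$. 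This correspondence $P\leftrightarrow\mu$ is manifestly reversible: from $\mu$ one forms the all-odd partition $(2\mu_j-1)_j$, conjugates it, and doubles. Multiplying the two weights yields $w(\lambda)w(\mu)=z^{2L-\cO(\pi)}q^{|\pi|}=z^{\eoc(\pi)}q^{|\pi|}$, matching the claimed correspondence with $\bEO(m,n)$.

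Finally I would check that $(\lambda,\mu)$ indeed lies in $\fF(0,q^2/z^2,0,z^2q^2;q^4)$ and that the map is bijective. As $a=c=0$ here, both factors are ordinary uncolored partitions, so the only remaining condition to verify is that $\lambda=\epsilon$ or the smallest part of $\lambda$ exceeds $\#(\mu)$. This is precisely the reincarnation of $\pi\in\fEO$: each odd part of $\pi$ exceeds $2L$, which translates into $\min\lambda\ge L+1>L=\#(\mu)$, while if $\pi$ has no odd part then $\lambda=\epsilon$. The degenerate cases are automatic: if $\pi$ has no even part then $L=0$ and $\mu=\epsilon$; if $\pi$ has no odd part then $\lambda=\epsilon$; and $\pi=\epsilon$ maps to $(\epsilon,\epsilon)$. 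Injectivity and surjectivity follow by running both sub-bijections backwards, recovering $L=\#(\mu)$, rebuilding the even parts from $\mu$ and the doubled odd parts from $\lambda$, with the constraint guaranteeing that the reconstructed odd parts all exceed $2L$ so that the output genuinely lies in $\bfEO$. The crux remains the even-part correspondence, where one must confirm both that conjugation sends the multiplicity pattern ``$L$ odd, smaller parts even'' exactly onto ``exactly $L$ parts, all odd'' and that the weight bookkeeping $|P|=4|\mu|-2L$ balances.
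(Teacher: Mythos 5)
Your construction is correct and is exactly the paper's bijection $\varphi$, just described in the inverse direction: the paper sends each part $k$ of $\lambda$ to two copies of the odd part $2k-1$ and sends $\mu$ to the doubling of the conjugate of $(2\mu_j-1)_j$, which is precisely your odd-part/even-part encoding run backwards, with the same weight bookkeeping and the same translation of the condition $\lambda_s>\#(\mu)$. Your parity argument for $Q'$ supplies detail the paper leaves implicit, but the approach is the same.
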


\begin{proof}
Let $(\lambda,\mu)\in\fF(0,q^2/z^2,0,z^2q^2;q^4)$ with $\#(\lambda)=s$ and $\#(\mu)=t$, then $\lambda_s>t$
from the definition. Moreover, $\lambda\in\fF(0,q^2/z^2;q^4)$ implies that $\lambda$ corresponds to a
partition $\hat{\lambda}\vdash (4|\lambda|-2s)$ into $2s$ odd parts, wherein each part appears an even number
of times and each occurrence is weighted by $z^{-1}$. Thus $\hat{\lambda}$ is weighted by $z^{-2s}$. On the
other hand, the conjugate partition of $\mu\in\fF(0,z^2q^2;q^4)$ is seen to be corresponding to a partition
$\hat{\mu}\vdash (4|\mu|-2t)$ into even parts, with the largest part $2t$ occurring an odd number of times
while all remaining parts occurring an even number of times. $\hat{\mu}$ is weighted by $z^{2t}$.

Now, let $\pi$ be the unique partition obtained from appending $\hat{\mu}$ to $\hat{\lambda}$. We see that
the condition $\lambda_s>t$ ensures that $\pi\in\bfEO$. In addition, we have indeed $\eoc(\pi)=2t-2s$.
Conversely, it should be clear how to start with a partition in $\bfEO$ and recover a unique pair in
$\fF(0,q^2/z^2,0,z^2q^2;q^4)$. See Fig.~\ref{varphi} below for a concrete example.
\end{proof}

\begin{figure}[t!]
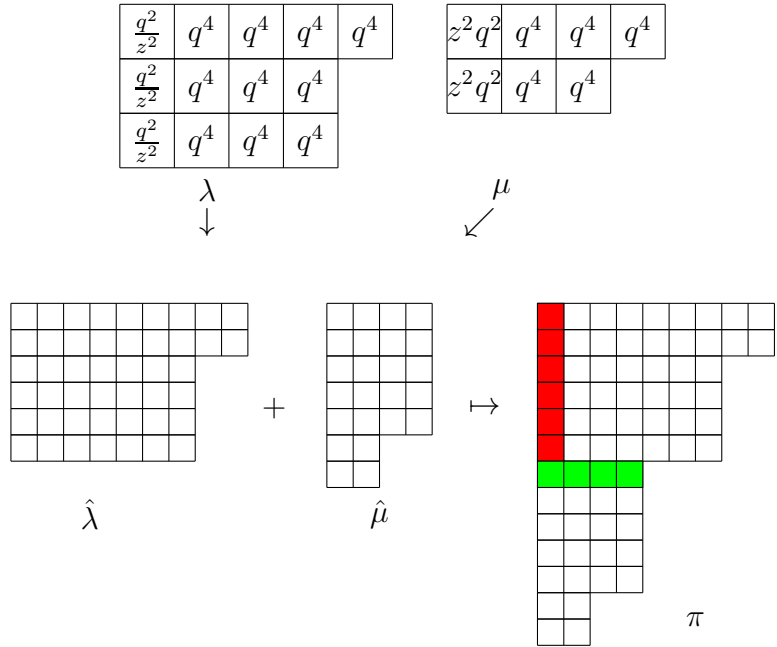

\begin{ferrers}[1.45]
\addcellrows{5+4+4}
\highlightcellbyletter{1}{1}{$\frac{q^2}{z^2}$}
\highlightcellbyletter{2}{1}{$\frac{q^2}{z^2}$}
\highlightcellbyletter{3}{1}{$\frac{q^2}{z^2}$}
\highlightcellbyletter{1}{2}{$q^4$}
\highlightcellbyletter{1}{3}{$q^4$}
\highlightcellbyletter{1}{4}{$q^4$}
\highlightcellbyletter{1}{5}{$q^4$}
\highlightcellbyletter{2}{2}{$q^4$}
\highlightcellbyletter{2}{3}{$q^4$}
\highlightcellbyletter{2}{4}{$q^4$}
\highlightcellbyletter{3}{2}{$q^4$}
\highlightcellbyletter{3}{3}{$q^4$}
\highlightcellbyletter{3}{4}{$q^4$}
\setbase{3}{0}
\addcellrows{4+3}
\highlightcellbyletter{1}{1}{$z^2q^2$}
\highlightcellbyletter{2}{1}{$z^2q^2$}
\highlightcellbyletter{1}{2}{$q^4$}
\highlightcellbyletter{1}{3}{$q^4$}
\highlightcellbyletter{1}{4}{$q^4$}
\highlightcellbyletter{2}{3}{$q^4$}
\highlightcellbyletter{2}{2}{$q^4$}
\addtext{-2.2}{-1.7}{$\lambda$}
\addtext{-2.2}{-2}{$\downarrow$}
\addtext{0.5}{-1.7}{$\mu$}
\addtext{0.3}{-2}{$\swarrow$}
\end{ferrers}
\begin{ferrers}[0.7]
\setbase{0}{-5}
\addcellrows{9+9+7+7+7+7}
\addtext{1.5}{-4}{$\hat{\lambda}$}
\addtext{5}{-2}{$+$}
\putright
\addcellrows{4+4+4+4+4+2+2}
\addtext{1}{-4}{$\hat{\mu}$}
\addtext{3}{-2}{$\mapsto$}
\putright
\addcellrows{9+9+7+7+7+7+4+4+4+4+4+2+2}
\highlightcellbycolor{1}{1}{red}
\highlightcellbycolor{2}{1}{red}
\highlightcellbycolor{3}{1}{red}
\highlightcellbycolor{4}{1}{red}
\highlightcellbycolor{5}{1}{red}
\highlightcellbycolor{6}{1}{red}
\highlightcellbycolor{7}{1}{green}
\highlightcellbycolor{7}{2}{green}
\highlightcellbycolor{7}{3}{green}
\highlightcellbycolor{7}{4}{green}
\addtext{3}{-6}{$\pi$}
\end{ferrers}
\caption{The correspondence $\varphi: (\lambda,\mu)\to \pi$ with $\eoc(\pi)=4-6=-2$}
\label{varphi}
\end{figure}

In view of Lemma~\ref{lem:Ferrersdiag and bEO}, the following theorem indeed covers Andrews's formula
\eqref{gf:bEO(m,n)} as the special case of setting $a\to q^2/z^2$, $b\to z^2q^2$, $q\to q^4$.

\begin{theorem}\label{thm:2-color}
We have
\begin{align}\label{eq:ab-involution}
\sum_{(\lambda,\mu)\in\mathfrak{F}(0,a;q)\times\mathfrak{F}(-a,b;q)}w(\lambda)w(\mu)
=\dfrac{(ab;q)_{\infty}}{(a;q)_{\infty}(b;q)_{\infty}}
=\sum_{(\lambda,\mu)\in\fF(0,a,0,b;q)}w(\lambda)w(\mu).
\end{align}
\end{theorem}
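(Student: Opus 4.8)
The plan is to treat the two equalities in \eqref{eq:ab-involution} separately. The first equality is immediate from Proposition~\ref{prop:gf-Ferdia}: since the summand $w(\lambda)w(\mu)$ factors over the two coordinates, the left-hand side equals the product $\bigl(\sum_{\lambda\in\fF(0,a;q)}w(\lambda)\bigr)\bigl(\sum_{\mu\in\fF(-a,b;q)}w(\mu)\bigr)$, and applying Proposition~\ref{prop:gf-Ferdia} with parameters $(0,a)$ and then $(-a,b)$ gives $\frac{(0;q)_\infty}{(a;q)_\infty}\cdot\frac{(ab;q)_\infty}{(b;q)_\infty}=\frac{(ab;q)_\infty}{(a;q)_\infty(b;q)_\infty}$, using $(0;q)_\infty=1$. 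All the combinatorial content therefore lies in the second equality, which I would prove by a sign-reversing (Franklin-type) involution $\Phi$ on $\fF(0,a;q)\times\fF(-a,b;q)$ whose set of fixed points is exactly $\fF(0,a,0,b;q)$.

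Before describing $\Phi$, I would record the invariants it must respect. Writing $w(\lambda)w(\mu)=\pm\, a^{\alpha}b^{\beta}q^{\gamma}$ and letting $j$ denote the number of marked (i.e.\ $ab$-colored) parts of $\mu$, one checks that $\beta=\#(\mu)$ (every part of $\mu$, marked or not, carries exactly one factor $b$), that $\alpha=\#(\lambda)+j$, and that the sign equals $(-1)^{j}$. Hence any weight-preserving, sign-reversing $\Phi$ must keep $\#(\mu)$ fixed while changing $j$ by one and $\#(\lambda)$ by one in the opposite direction. This forces the elementary move to \emph{toggle} a single part of $\mu$ between marked and unmarked status (thereby preserving $\#(\mu)$) while simultaneously creating or destroying one part of $\lambda$.

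Concretely, $\Phi$ would interchange the largest marked part of $\mu$ with the smallest part of $\lambda$. Starting from a pair in which $\mu$ has a marked part, take the largest one, of size $m$; to preserve the $q$-degree, replace it by an \emph{unmarked} part of size $m-c$ and adjoin a new part of size $c$ to $\lambda$, where $c=\lambda_{\#(\lambda)}$ is the smallest part of $\lambda$ (the $q$-weights $m-2$, $m-c-1$, and $c-1$ balance, and $\alpha$, $\beta$ are unchanged while the sign flips). The inverse strips off the smallest part of $\lambda$ and re-marks the corresponding part of $\mu$. The direction of $\Phi$, together with the decision that a pair is fixed, would be dictated by comparing $\lambda_{\#(\lambda)}$ with $\#(\mu)$: a pair survives precisely when $\mu$ has no marked part and $\lambda=\epsilon$ or $\lambda_{\#(\lambda)}>\#(\mu)$, that is, exactly when $(\lambda,\mu)\in\fF(0,a,0,b;q)$. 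Summing the surviving weights then yields the right-hand side.

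The hard part will be the bookkeeping that makes $\Phi$ a genuine involution with precisely this fixed-point set: I must pin down the exact comparison selecting the forward versus backward move, guarantee that the re-marked part stays \emph{distinct} from the other marked parts and has size $\ge 2$, and dispose of the boundary cases ($\lambda=\epsilon$, the largest marked part too small to shrink, or a clash of the comparison at equality). As an independent check that the target identity is correct, one can evaluate the right-hand side analytically by grouping according to $t=\#(\mu)$:
\begin{align*}
\sum_{(\lambda,\mu)\in\fF(0,a,0,b;q)}w(\lambda)w(\mu)
&=\sum_{t\ge0}\frac{b^t}{(q;q)_t}\cdot\frac{1}{(aq^t;q)_\infty}
=\frac{1}{(a;q)_\infty}\sum_{t\ge0}\frac{(a;q)_t}{(q;q)_t}\,b^t
=\frac{(ab;q)_\infty}{(a;q)_\infty(b;q)_\infty},
\end{align*}
where the inner sums count, respectively, partitions $\mu$ with exactly $t$ parts and partitions $\lambda$ all of whose parts exceed $t$, and the final step is Cauchy's $q$-binomial theorem. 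The involution described above is the desired combinatorial proof, realizing this cancellation bijectively.
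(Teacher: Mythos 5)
Your handling of the first equality is exactly the paper's (factor the sum and apply Proposition~\ref{prop:gf-Ferdia} twice), and your closing ``independent check'' is actually a complete analytic proof of the second equality: grouping by $t=\#(\mu)$, using $(aq^t;q)_\infty=(a;q)_\infty/(a;q)_t$, and invoking the $q$-binomial theorem is airtight. So the identity itself is established by your write-up --- but only by the $q$-series detour, which is precisely what the theorem is meant to avoid (it answers Andrews' request for a combinatorial proof). The genuine gap is in the involution, and it is not just deferred bookkeeping: the elementary move you propose cannot be made into an involution. Take $\lambda=(1)$ and $\mu=(3^{ab},2^{ab})$. Your forward move shrinks the marked $3$ to an unmarked $2$ and adjoins a part $1$ to $\lambda$, giving $\lambda^*=(1,1)$, $\mu^*=(2,2^{ab})$. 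This image still has a marked part in $\mu^*$ and still satisfies $\lambda^*_{\#(\lambda^*)}=1\le\#(\mu^*)=2$, exactly as the preimage did, so no rule ``comparing $\lambda_{\#(\lambda)}$ with $\#(\mu)$'' can tell the map to reverse here rather than shrink again (and shrinking again gives $((1,1,1),(2,1))$, so the map has order $>2$ on this orbit). The inverse would moreover have to know to grow the \emph{unmarked} $2$ back into a marked $3$ rather than touch the marked $2$ --- information that is not recoverable from $(\lambda^*,\mu^*)$. The same row-based move also breaks when $\lambda=\epsilon$ but $\mu$ is marked, when $m-c\le0$, and when $m-c$ collides in size with another marked part; these are not edge cases to be patched but symptoms of the move being built on the wrong statistic.

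The paper's involution works with columns of $\mu$ rather than rows, which is what makes the case analysis close up. With $t=\#(\mu)$, let $\lambda_i$ be the largest part of $\lambda$ with $\lambda_i\le t$ and let $j$ be the largest index of a marked part of $\mu$ (each set to $0$ if absent). If $\lambda_i>j$, delete the part $\lambda_i$ from $\lambda$ and append one cell to each of $\mu_1,\dots,\mu_{\lambda_i}$, marking the new corner cell of row $\lambda_i$ with $-a$; if $\lambda_i\le j\neq0$, do the reverse: remove the last cell from each of $\mu_1,\dots,\mu_j$ (which unmarks row $j$) and insert $j$ as a new part of $\lambda$. A part of size $c$ weighs $aq^{c-1}$ in $\lambda$ and the corresponding appended cells weigh $-aq^{c-1}$ in $\mu$, so the map is weight-preserving and sign-reversing; the single comparison $\lambda_i$ versus $j$ makes the two cases mutually inverse; the condition $\lambda_i\le t$ guarantees the appended cells land on existing rows; and the fixed points $\lambda_i=j=0$ are exactly $\fF(0,a,0,b;q)$. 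If you insist on your row-based picture, conjugating $\mu$ turns your move into this one --- which is the repair your sketch is missing.
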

\begin{proof}
The first equality follows directly from applying Proposition~\ref{prop:gf-Ferdia} for the pair
$(\lambda,\mu)$. To show the weight equivalence between $\fF(0,a;q)\times\fF(-a,b;q)$ and $\fF(0,a,0,b;q)$,
it suffices to construct a weight-preserving, sign-reversing involution\footnote{That is, we have $w(\lambda^*)w(\mu^*)=-w(\lambda)w(\mu)$ whenever $(\lambda^*,\mu^*)\neq (\lambda,\mu)$.}, say
$*:(\lambda,\mu)\mapsto (\lambda^*,\mu^*)$, over $\fF(0,a;q)\times\fF(-a,b;q)$, with $\fF(0,a,0,b;q)$ being
precisely the set of fixed pairs; see Fig.~\ref{fig:involution} for an example of the map $*$, where we use
$\bar{a}$ in place of the weight $-a$ to save space.

Given a pair $(\lambda,\mu)$ with $\#(\mu)=t$, let $\lambda_i$ be the largest part of $\lambda$ that is no
greater than $t$, note that the leftmost cell of $\lambda_i$ is filled with $a$ since $\lambda\in\fF(0,a;q)$.
Also let $j$ be the largest index of an $ab$-colored part of $\mu$, note that the rightmost cell of $\mu_j$
is filled with $-a$ since $\mu\in\fF(-a,b;q)$. If such a $\lambda_i$ (resp.~$j$) does not exist, we simply
set $\lambda_i=0$ (resp.~$j=0$). Now we compare $\lambda_i$ with $j$, and consider the following three
cases.
\begin{enumerate}
\item[Case I] $\lambda_i\le j\neq 0$. Insert $j$ as a new part into $\lambda$ to get $\lambda^*$, and
convert $\mu$ to $\mu^*$ whose parts are given by
\begin{align*}
\mu^*_k=\begin{cases}
\mu_k-1, &\textrm{if}~1\le k\le j,\\
\mu_k, &\textrm{otherwise.}\\
\end{cases}
\end{align*}
Note that $\mu_j$ is $ab$-colored hence $\mu_j>\mu_{j+1}$, which ensures $\mu^*_j\ge\mu^*_{j+1}$, and thus
$\mu^*$ is well-defined.

\item[Case II] $\lambda_i>j$. Remove part $\lambda_i$ from $\lambda$ to get $\lambda^*$. Convert $\mu$ to
$\mu^*$ whose parts are given by
\begin{align*}
\mu^*_k=\begin{cases}
\mu_k+1, &\textrm{if}~1\le k\le \lambda_i,\\
\mu_k, &\textrm{otherwise.}\\
\end{cases}
\end{align*}
We fill the rightmost cell of $\mu^*_{\lambda_i}$ with $-a$, while other inserted cells are filled with $q$.
Note that originally $\mu_{\lambda_i}$ cannot be $ab$-colored, or we are not in Case II. Therefore color
part $\mu^*_{\lambda_i}$ as $ab$ is legitimate.

\item[Case III] $\lambda_i=j=0$. We stay put and take $(\lambda^*,\mu^*)=(\lambda,\mu)$. Note that we are in
this case if and only if all parts of $\lambda$ are greater than $t$ (or $\lambda=\epsilon$), and none part
of $\mu$ is $ab$-colored. In other words, $(\lambda,\mu)\in\fF(0,a,0,b;q)$, as desired.
\end{enumerate}
It should be clear that the operations conducted in Cases I and II are inverses of each other, and
$*:(\lambda,\mu)\mapsto(\lambda^*,\mu^*)$ as described above is an involution over
$\fF(0,a;q)\times\fF(-a,b;q)$, such that $w(\lambda^*)w(\mu^*)=-w(\lambda)w(\mu)$ whenever
$(\lambda^*,\mu^*)\neq(\lambda,\mu)$. And the set of pairs fixed by $*$ is exactly $\fF(0,a,0,b;q)$. We have
now proven \eqref{eq:ab-involution}.
\end{proof}

\begin{figure}[htp]
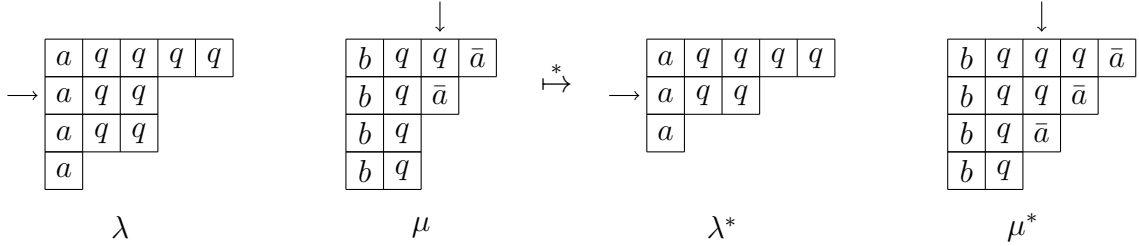

\begin{ferrers}
\addcellrows{5+3+3+1}
\highlightcellbyletter{1}{1}{$a$}
\highlightcellbyletter{2}{1}{$a$}
\highlightcellbyletter{3}{1}{$a$}
\highlightcellbyletter{4}{1}{$a$}
\highlightcellbyletter{1}{2}{$q$}
\highlightcellbyletter{1}{3}{$q$}
\highlightcellbyletter{1}{4}{$q$}
\highlightcellbyletter{1}{5}{$q$}
\highlightcellbyletter{2}{2}{$q$}
\highlightcellbyletter{2}{3}{$q$}
\highlightcellbyletter{3}{2}{$q$}
\highlightcellbyletter{3}{3}{$q$}
\highlightrowbyarrow{2}
\addtext{1}{-2.5}{$\lambda$}
\setbase{4}{0}
\addcellrows{4+3+2+2}
\highlightcellbyletter{1}{1}{$b$}
\highlightcellbyletter{2}{1}{$b$}
\highlightcellbyletter{3}{1}{$b$}
\highlightcellbyletter{4}{1}{$b$}
\highlightcellbyletter{1}{4}{$\bar a$}
\highlightcellbyletter{2}{3}{$\bar a$}
\highlightcellbyletter{1}{2}{$q$}
\highlightcellbyletter{1}{3}{$q$}
\highlightcellbyletter{2}{2}{$q$}
\highlightcellbyletter{3}{2}{$q$}
\highlightcellbyletter{4}{2}{$q$}
\highlightcolumnbyarrow{3}
\addtext{1}{-2.5}{$\mu$}
\addtext{2.8}{-0.5}{$\stackrel{*}{\mapsto}$}

\setbase{8}{0}
\addcellrows{5+3+1}
\highlightcellbyletter{1}{1}{$a$}
\highlightcellbyletter{2}{1}{$a$}
\highlightcellbyletter{3}{1}{$a$}
\highlightcellbyletter{1}{2}{$q$}
\highlightcellbyletter{1}{3}{$q$}
\highlightcellbyletter{1}{4}{$q$}
\highlightcellbyletter{1}{5}{$q$}
\highlightcellbyletter{2}{2}{$q$}
\highlightcellbyletter{2}{3}{$q$}
\highlightrowbyarrow{2}
\addtext{1}{-2.5}{$\lambda^*$}
\setbase{12}{0}
\addcellrows{5+4+3+2}
\highlightcellbyletter{1}{1}{$b$}
\highlightcellbyletter{2}{1}{$b$}
\highlightcellbyletter{3}{1}{$b$}
\highlightcellbyletter{4}{1}{$b$}
\highlightcellbyletter{1}{5}{$\bar a$}
\highlightcellbyletter{2}{4}{$\bar a$}
\highlightcellbyletter{3}{3}{$\bar a$}
\highlightcellbyletter{1}{2}{$q$}
\highlightcellbyletter{1}{3}{$q$}
\highlightcellbyletter{1}{4}{$q$}
\highlightcellbyletter{2}{2}{$q$}
\highlightcellbyletter{2}{3}{$q$}
\highlightcellbyletter{3}{2}{$q$}
\highlightcellbyletter{4}{2}{$q$}
\highlightcolumnbyarrow{3}
\addtext{1}{-2.5}{$\mu^*$}
\end{ferrers}
\caption{An example of the involution $*$ (Case II with $\lambda_i=3>j=2$)}\label{fig:involution}
\end{figure}

Next, we proceed to establish Theorem~\ref{THM:AB-cong}. The first step is to note the vanishing of certain
$N_{\textrm{eo}}(k,m,n)$, which can be readily deduced from Lemma~\ref{lem:Ferrersdiag and bEO}.

\begin{corollary}\label{coro:eoc-refine}
For any $n\geq0$,
\begin{align}
N_{\emph{eo}}(0,4,4n+2)=N_{\emph{eo}}(2,4,4n+4)=0.\label{zero-iden}
\end{align}
\end{corollary}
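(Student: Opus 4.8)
The plan is to distill from Lemma~\ref{lem:Ferrersdiag and bEO} a single congruence linking the size of a partition in $\bfEO$ to its even-odd crank, namely that
\begin{align*}
|\pi|\equiv\eoc(\pi)\pmod 4\qquad\text{for every }\pi\in\bfEO.
\end{align*}
Granting this, both vanishing assertions in \eqref{zero-iden} are immediate: a partition $\pi$ of $4n+2$ is forced to have $\eoc(\pi)\equiv 2\pmod 4$, so none can have crank $\equiv 0$, while a partition $\pi$ of $4n+4$ is forced to have $\eoc(\pi)\equiv 0\pmod 4$, so none can have crank $\equiv 2$.

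To obtain the congruence, first I would unwind the data attached to the pair $(\lambda,\mu)\in\fF(0,q^2/z^2,0,z^2q^2;q^4)$ that $\varphi$ sends to $\pi$. Setting $\#(\lambda)=s$ and $\#(\mu)=t$, the proof of Lemma~\ref{lem:Ferrersdiag and bEO} records that $\hat\lambda\vdash 4|\lambda|-2s$ and $\hat\mu\vdash 4|\mu|-2t$, and that these assemble into $\pi$ with
\begin{align*}
|\pi|=4\bigl(|\lambda|+|\mu|\bigr)-2(s+t),\qquad \eoc(\pi)=2t-2s.
\end{align*}
Subtracting yields $|\pi|-\eoc(\pi)=4\bigl(|\lambda|+|\mu|-t\bigr)$, which is divisible by $4$; hence the claimed congruence holds. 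Consequently, once $n$ is fixed, every partition in $\bfEO_n$ has its even-odd crank confined to the single residue class $n\bmod 4$, and specializing to the two progressions in the statement forces $N_{\eo}(0,4,4n+2)=N_{\eo}(2,4,4n+4)=0$, completing the proof.

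I do not expect a genuine obstacle here: the argument is pure bookkeeping of the weights carried through the bijection $\varphi$, and the only place demanding care is to read off $s$, $t$, $|\lambda|$, and $|\mu|$ correctly and to keep the sign straight in $\eoc(\pi)=2t-2s$. As a sanity check, and an alternative that sidesteps the bijection, one can verify $|\pi|\equiv\eoc(\pi)\pmod 4$ directly from the structure of $\bfEO$: every odd part of $\pi$ occurs with even multiplicity, so the odd parts contribute $\equiv\cO(\pi)\pmod 4$; among the even parts only the largest, say $E$, occurs an odd number of times, so the even parts contribute $\equiv E\pmod 4$; and since $\cO(\pi)$ is even, $|\pi|-\eoc(\pi)\equiv\bigl(\cO(\pi)+E\bigr)-\bigl(E-\cO(\pi)\bigr)=2\cO(\pi)\equiv 0\pmod 4$.
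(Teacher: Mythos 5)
Your proposal is correct and follows essentially the same route as the paper: both extract from the bijection $\varphi$ of Lemma~\ref{lem:Ferrersdiag and bEO} the congruence $\eoc(\pi)=2t-2s\equiv 2s+2t\equiv|\pi|\pmod 4$ and read off the two vanishing statements. Your closing ``sanity check,'' which verifies $|\pi|\equiv\eoc(\pi)\pmod 4$ directly from the multiplicity conditions defining $\bfEO$, is a valid and pleasantly self-contained alternative that bypasses the bijection entirely, but it does not change the substance of the argument.
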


\begin{proof}
As explained in the proof of Lemma~\ref{lem:Ferrersdiag and bEO}, for each $\pi=\varphi(\lambda,\mu)$ with
$\#(\lambda)=s$ and $\#(\mu)=t$, we have
\begin{align*}
\textrm{eoc}(\pi)=2t-2s\equiv 2t+2s\equiv |\pi|\pmod 4,
\end{align*}
which is clearly equivalent to \eqref{zero-iden}.
\end{proof}

\begin{proof}[Proof of Theorem \ref{THM:AB-cong}]
We first show the divisibility by $5$. Note that when $r\not\equiv 0 \pmod k$,
$N_{\textrm{eo}}(r,k,n)=N_{\textrm{eo}}(k-r,k,n)$ via the map of conjugation, hence for $0<r<k/2$ we have
\begin{align*}
r\big(N_{\textrm{eo}}(r,k,n)-N_{\textrm{eo}}(k-r,k,n)\big)
 &=r\sum_{\substack{\lambda\in\overline{\mathfrak{EO}}_n \\ \eoc(\lambda)\equiv r\pmod{k}}}
\big(\mathcal{O}(\lambda)-\mathcal{O}(\lambda')\big) \\
 &\equiv-r^2N_{\textrm{eo}}(r,k,n)\pmod k, \qquad (\text{by~\eqref{eq:Andrews=Stanley}})
\end{align*}
In particular, if $\gcd(r,k)=1$,
\begin{align*}
N_{\textrm{eo}}(r,k,n)-N_{\textrm{eo}}(k-r,k,n)\equiv-rN_{\textrm{eo}}(r,k,n)\pmod{k}.
\end{align*}
Therefore,
\begin{align*}
\big(N_{\textrm{eo}}(1,5,n)-N_{\textrm{eo}}(4,5,n)\big)
 &+2\big(N_{\textrm{eo}}(2,5,n)-N_{\textrm{eo}}(3,5,n)\big)\\
 &\quad\equiv-N_{\textrm{eo}}(1,5,n)-4N_{\textrm{eo}}(2,5,n)\pmod{5}.
\end{align*}
Finally, according to \cite[Theorem 2.1.5]{Pas},
\begin{align*}
N_{\textrm{eo}}(1,5,10n)=N_{\textrm{eo}}(2,5,10n),
\end{align*}
and \eqref{eq:1/5} already gives us
\begin{align*}
N_{\textrm{eo}}(1,5,10n+8)=N_{\textrm{eo}}(2,5,10n+8).
\end{align*}
So both congruences \eqref{AB-cong:10n8} and \eqref{AB-cong:10n} hold modulo $5$. Since each partition
$\lambda\in\overline{\mathfrak{EO}}$ has an even number of odd parts, we get the modulo $2$ results
immediately. All it remains, is to show that
\begin{align}
 &NT_{\textrm{eo}}(1,5,10n+8)-NT_{\textrm{eo}}(4,5,10n+8)\nonumber\\
 &\quad=\sum_{\substack{\lambda\in\bfEO_{10n+8} \\ \eoc(\lambda)\equiv 1\pmod{5}}}
\big(\mathcal{O}(\lambda)-\mathcal{O}(\lambda')\big)
=-\sum_{\substack{\lambda\in\bfEO_{10n+8} \\ \eoc(\lambda)\equiv1\pmod{5}}}
\eoc(\lambda)\equiv0\pmod 4.\label{cong:mod4}
\end{align}
To this end, we write $n=4m+i$ with $i=0,1,2,3$, and discuss the following two cases.
\begin{enumerate}[(1)]
\item The case $i\equiv0\pmod{2}$. In this case $10n+8\equiv 0 \pmod 4$ so each
$\lambda\in\overline{\mathfrak{EO}}_{10n+8}$ satisfies $\textrm{eoc}(\lambda)\equiv 0\pmod 4$ by
\eqref{zero-iden}, thus \eqref{cong:mod4} holds.

\item The case $i\equiv1\pmod{2}$. In this case $10n+8\equiv 2 \pmod 4$ so each
$\lambda\in\overline{\mathfrak{EO}}_{10n+8}$ satisfies $\textrm{eoc}(\lambda)\equiv 2\pmod 4$ by
\eqref{zero-iden}, and the total number of summands is
\begin{align*}
N_{\textrm{eo}}(1,5,40m+10i+8)=\frac{1}{5}\overline{\mathcal{EO}}(40m+10i+8)\equiv0\pmod{2},
\quad\text{(by~\eqref{cong:bEOparity})}
\end{align*}
i.e., it is an even number. So \eqref{cong:mod4} still holds.
\end{enumerate}
This completes the proof.
\end{proof}

\subsection{Three more stable subsets of \texorpdfstring{$\fEO$}{EO}}\label{subsec:subsets of EO}
With the discussion of $\overline{\fEO}$ after Definition \ref{def:stable} in mind, other stable subsets of
$\fEO$ besides $\bfEO$ naturally present themselves. We study three of them in this subsection. First note
that $\mathfrak{EO}$ itself is not stable, as can be seen from Fig. \ref{ferrers}, wherein
$\lambda'\in\mathfrak{EO}$ but $\lambda\not\in\mathfrak{EO}$. The three stable subsets of $\mathfrak{EO}$
are defined as follows, the first of which is the largest one, in the sense that there exists no stable
subset $\mathfrak{C}$, such that $\mathfrak{EO}^{(1)}\subsetneq\mathfrak{C}\subsetneq\mathfrak{EO}$.
\begin{align*}
\mathfrak{EO}^{(1)} &:=\{\lambda\in\mathfrak{EO}\colon\textrm{at~most~one~part~of~$\lambda$~aont}\},\\
\mathfrak{EO}^{(2)} &:=\{\lambda\in\mathfrak{EO}\colon
\textrm{only~the~smallest~odd~part~of~$\lambda$~aont}\},\\
\mathfrak{EO}^{(3)} &:=\{\lambda\in\mathfrak{EO}\colon
\textrm{all~parts~of~$\lambda$~odd~and~only~the~largest~odd~part~of~$\lambda$ aont}\}.
\end{align*}
Here ``aont'' stands for ``{\bf a}ppears an {\bf o}dd {\bf n}umber of {\bf t}imes''. Note that
$\epsilon\not\in\fEO^{(2)}\cup\fEO^{(3)}$. Interpreting the defining conditions for $\mathfrak{EO}^{(1)}$,
$\mathfrak{EO}^{(2)}$ and $\mathfrak{EO}^{(3)}$ in terms of the profile words for the respective
partitions, we obtain the following connection with mock theta functions $\nu(q)$ and $\omega(q)$.

\begin{theorem}\label{thm:mock-theta}
The three subsets $\fEO^{(1)}$, $\fEO^{(2)}$ and $\fEO^{(3)}$ are all stable. Furthermore, we have
\begin{align}
\sum_{n=1}^\infty\mathcal{EO}^{(2)}(n)q^n &=\dfrac{1}{2}q\big(\nu(q)+\nu(-q)\big),\label{gf2-mock}\\
\sum_{n=1}^\infty\mathcal{EO}^{(3)}(n)q^n &=q\!\:\omega(q^2),\label{gf3-mock}
\end{align}
where $\nu(q)$ and $\omega(q)$ are defined as in \eqref{def:nu} and \eqref{def:omega}.
\end{theorem}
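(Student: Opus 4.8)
The plan is to push everything through the profile word dictionary from Section~\ref{sec:pre}. First I would record the elementary correspondence between a partition $\lambda$ with profile word $w_\lambda=(e_1,n_1,\ldots,e_k,n_k)$ and its part--multiplicity data: the distinct parts of $\lambda$, listed increasingly, are the partial sums $s_j=e_1+\cdots+e_j$, and the part $s_j$ occurs with multiplicity exactly $n_j$. Hence ``$s_j$ is odd'' is governed by the parity of $e_1+\cdots+e_j$, while ``$s_j$ appears an odd number of times'' is simply ``$n_j$ is odd.'' Under this dictionary, $\lambda\in\fEO$ becomes ``at most one $e_i$ is odd,'' the defining condition of $\fEO^{(1)}$ adds ``at most one $n_i$ is odd,'' that of $\fEO^{(2)}$ forces the unique odd $e$ and the unique odd $n$ to occur at the same index, and that of $\fEO^{(3)}$ forces $e_1,n_k$ odd with every other $e_i,n_i$ even. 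Since Proposition~\ref{prop:conj and rev} says conjugation reverses $w_\lambda$, that is, it interchanges the $e$-sequence with the reversed $n$-sequence, each of these three conditions is manifestly invariant under that symmetry, and stability of $\fEO^{(1)},\fEO^{(2)},\fEO^{(3)}$ follows at once.

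For \eqref{gf3-mock} I would argue directly from the structure of $\fEO^{(3)}$: such a partition is built from a largest (odd) part $2m+1$ occurring an odd number of times, together with smaller odd parts each occurring an even number of times. Summing the geometric series for the largest part and doubling the smaller odd parts gives
\[
\sum_{n\ge1}\mathcal{EO}^{(3)}(n)q^n=\sum_{m\ge0}\frac{q^{2m+1}}{1-q^{4m+2}}\cdot\frac{1}{(q^2;q^4)_m}=\sum_{m\ge0}\frac{q^{2m+1}}{(q^2;q^4)_{m+1}},
\]
and I would then identify the right-hand side with $q\,\omega(q^2)$ through the classical Eulerian representation $\omega(q)=\sum_{n\ge0}q^n/(q;q^2)_{n+1}$. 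Equivalently, and more in keeping with the profile-word viewpoint, a Durfee-square decomposition forces the side to be odd, say $2n+1$, with the subpartitions below and to the right of the square ranging symmetrically over partitions generated by $1/(q^2;q^4)_{n+1}$; this reproduces the defining double sum $\sum_n q^{(2n+1)^2}/(q^2;q^4)_{n+1}^2=q\,\omega(q^2)$ directly.

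For \eqref{gf2-mock} the natural first move is to strip off one copy of the smallest odd part $o=2s+1$; what remains has every part of even multiplicity, i.e.\ a ``doubled'' $\fEO$-partition whose even parts lie below $o$ and whose odd parts lie at or above $o$. Summing over $s$ yields
\[
\sum_{n\ge1}\mathcal{EO}^{(2)}(n)q^n=\sum_{s\ge0}\frac{q^{2s+1}}{(q^4;q^4)_s\,(q^{4s+2};q^4)_\infty}.
\]
I would then factor out $(q^2;q^4)_\infty^{-1}$, rewrite $(q^{4s+2};q^4)_\infty^{-1}=(q^2;q^4)_s/(q^2;q^4)_\infty$, and collapse the resulting series by the $q$-binomial theorem to a single infinite product. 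It then remains to match this product with the stated combination $\tfrac12 q\big(\nu(q)+\nu(-q)\big)$ of the mock theta function $\nu$.

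I expect the genuine obstacle to be exactly this last identification. Unlike the $\omega$ case, the $\fEO^{(2)}$ generating function collapses to an eta-quotient, so the real content is the non-obvious fact that the \emph{even part} of $\nu$ coincides with that product. The clean way to see this is to observe that $\tfrac12\big(\nu(q)+\nu(-q)\big)$ is the even part of $\nu$, and that averaging $1/(q;q^2)_{n+1}$ against $1/(-q;q^2)_{n+1}$ selects partitions into odd parts with an even number of parts; reconciling this with the core weight $q^{n^2+n}$ and the product above requires a $q$-series transformation rather than a term-by-term partition count. The other place demanding care is the parity bookkeeping in the profile-word translation, namely tracking ``at most/exactly one odd entry'' at the correct index under reversal, though this is routine once the dictionary is in place.
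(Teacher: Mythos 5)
Your proposal is correct, and while your stability argument coincides with the paper's (the same profile-word characterizations combined with Proposition~\ref{prop:conj and rev}), your treatment of the two generating functions takes a genuinely different route. For \eqref{gf2-mock} the paper computes nothing directly: it exhibits a bijection $\fEO^{(2)}_n\to\bfEO_{n-1}$ (subtract $1$ from one copy of the smallest odd part, turning it into a new largest even part of odd multiplicity) and then quotes $\sum_n\bEO(n)q^n=(q^4;q^4)_\infty^3/(q^2;q^2)_\infty^2=\tfrac12\big(\nu(q)+\nu(-q)\big)$ from \cite{and2}. Your direct series $\sum_{s\ge0}q^{2s+1}/\big((q^4;q^4)_s(q^{4s+2};q^4)_\infty\big)$ collapses via the $q$-binomial theorem to exactly the same eta-quotient $q\,(q^4;q^4)_\infty^3/(q^2;q^2)_\infty^2$, so the ``genuine obstacle'' you flag at the end is precisely Theorem~1.1 of \cite{and2}: a known identity that the paper simply cites rather than reproves, so your hesitation there is unwarranted and is the only soft spot in the write-up. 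For \eqref{gf3-mock} the paper identifies $\fEO^{(3)}$ with $2$-dilated odd Ferrers graphs (top-left cell reduced by $1$) and invokes $q\!\:\omega(q)$ as their generating function from \cite{and5}; your Eulerian computation $\sum_{m\ge0}q^{2m+1}/(q^2;q^4)_{m+1}$ combined with Fine's classical identity $\omega(q)=\sum_{n\ge0}q^n/(q;q^2)_{n+1}$, and likewise your Durfee-square dissection yielding $\sum_{n\ge0} q^{(2n+1)^2}/(q^2;q^4)_{n+1}^2$, are both valid and essentially the odd-Ferrers-graph argument read off in two different ways. What each approach buys: the paper's bijections make the relation $\mathcal{EO}^{(2)}(n+1)=\bEO(n)$ explicit (which it reuses in the subsequent corollary on $p_\nu(n)$), whereas your computations stay inside elementary $q$-series manipulations at the cost of importing two classical summation identities.
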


\begin{proof}
Given a profile word $w_{\lambda}=(e_1,n_1,\ldots,e_k,n_k)$ associated with partition $\lambda$. It can be
characterized respectively as having at most one odd number among all $n_i$'s when $\lambda\in\fEO^{(1)}$,
having a consecutive odd pair $(e_i,n_i)$ when $\lambda\in\fEO^{(2)}$, and having only $e_1$ and $n_k$ as
odd components when $\lambda\in\fEO^{(3)}$. Each of the above characterizations is clearly invariant under
the reversal $w_{\lambda}\to w_{\lambda'}$, hence all three subsets $\mathfrak{EO}^{(1)}$,
$\mathfrak{EO}^{(2)}$ and $\mathfrak{EO}^{(3)}$ are stable.

To obtain \eqref{gf2-mock}, we observe that there is a natural bijection between the set
$\mathfrak{EO}^{(2)}_n$ and $\overline{\mathfrak{EO}}_{n-1}$. For a given partition
$\lambda\in\mathfrak{EO}^{(2)}_n$ with $n\ge 1$, subtract one from the last smallest odd part of $\lambda$
and denote this new partition by $\mu$. Obviously, $\mu\in\overline{\mathfrak{EO}}_{n-1}$. Conversely, for a
given partition $\mu\in\overline{\mathfrak{EO}}_{n-1}$, adding one to its first largest even
part\footnote{If $\mu$ has none even part, simply append $1$ as a new part.}
recovers for us a partition $\lambda\in\fEO^{(2)}_n$. Therefore,
\begin{align*}
\sum_{n=1}^\infty\mathcal{EO}^{(2)}(n)q^n=\sum_{n=1}^\infty\overline{\mathcal{EO}}(n-1)q^{n}
=q\dfrac{(q^4;q^4)_\infty^3}{(q^2;q^2)_\infty^2}=\dfrac{1}{2}q\big(\nu(q)+\nu(-q)\big),
\end{align*}
where the second and last equalities follow from Corollary 3.2 and Theorem 1.1 in \cite{and2}.

To handle the connection with $\omega(q)$ and prove \eqref{gf3-mock}, we recall a variation of the standard
Ferrers graph introduced by Andrews in \cite{and5}, called the ``odd Ferrers graph''. It consists of a Ferrers
graph using $2$'s with a surrounding border of $1$'s; see Fig.~\ref{fig:2-oddFer} for an illustration along
with its ``$2$-dilation''. As noted in \cite[p.~60]{and5}, the generating function for odd Ferrers graphs is
$q\!\:\omega(q)$. Now we immediately have \eqref{gf3-mock} by noticing that each partition in $\fEO^{(3)}$ can
be uniquely represented by a $2$-dilated odd Ferrers graph with its top-left corner cell subtracted by $1$.
\end{proof}

\begin{figure}[tbh]
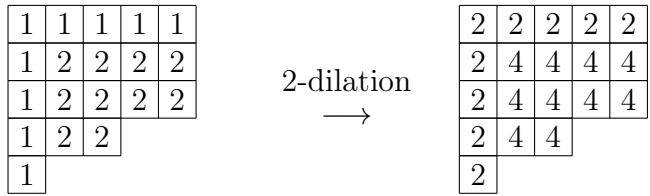

\begin{ferrers}
\addcellrows{5+5+5+3+1}
\highlightcellbyletter{1}{1}{$1$}
\highlightcellbyletter{2}{1}{$1$}
\highlightcellbyletter{3}{1}{$1$}
\highlightcellbyletter{4}{1}{$1$}
\highlightcellbyletter{5}{1}{$1$}
\highlightcellbyletter{1}{2}{$1$}
\highlightcellbyletter{1}{3}{$1$}
\highlightcellbyletter{1}{4}{$1$}
\highlightcellbyletter{1}{5}{$1$}
\highlightcellbyletter{2}{2}{$2$}
\highlightcellbyletter{2}{3}{$2$}
\highlightcellbyletter{2}{4}{$2$}
\highlightcellbyletter{2}{5}{$2$}
\highlightcellbyletter{3}{2}{$2$}
\highlightcellbyletter{3}{3}{$2$}
\highlightcellbyletter{3}{4}{$2$}
\highlightcellbyletter{3}{5}{$2$}
\highlightcellbyletter{4}{2}{$2$}
\highlightcellbyletter{4}{3}{$2$}
\setbase{6}{0}
\addcellrows{5+5+5+3+1}
\highlightcellbyletter{1}{1}{$2$}
\highlightcellbyletter{2}{1}{$2$}
\highlightcellbyletter{3}{1}{$2$}
\highlightcellbyletter{4}{1}{$2$}
\highlightcellbyletter{5}{1}{$2$}
\highlightcellbyletter{1}{2}{$2$}
\highlightcellbyletter{1}{3}{$2$}
\highlightcellbyletter{1}{4}{$2$}
\highlightcellbyletter{1}{5}{$2$}
\highlightcellbyletter{2}{2}{$4$}
\highlightcellbyletter{2}{3}{$4$}
\highlightcellbyletter{2}{4}{$4$}
\highlightcellbyletter{2}{5}{$4$}
\highlightcellbyletter{3}{2}{$4$}
\highlightcellbyletter{3}{3}{$4$}
\highlightcellbyletter{3}{4}{$4$}
\highlightcellbyletter{3}{5}{$4$}
\highlightcellbyletter{4}{2}{$4$}
\highlightcellbyletter{4}{3}{$4$}
\addtext{-1.5}{-1}{$2$-dilation}
\addtext{-1.5}{-1.5}{$\longrightarrow$}
\end{ferrers}
\caption{An odd Ferrers graph and its $2$-dilation}
\label{fig:2-oddFer}
\end{figure}

\begin{remark}
Several remarks on Theorem~\ref{thm:mock-theta} are necessary. Firstly, the coefficients of $\nu(-q)$ and
$\omega(q)$ also possess their own partition-theoretic interpretations, so it is interesting to give a bijective
proof of \eqref{gf2-mock} and \eqref{gf3-mock}. Secondly, there are various investigations on arithmetic
properties of coefficients of $\nu(q)$ and $\omega(q)$ (see, for example, \cite{GP, BO, APSY}). This suggests
that there should be various congruences for functions $\EO^{(2)}(n)$ and $\EO^{(3)}(n)$, inherited from those
of $\nu(q)$ and $\omega(q)$. Finally, it is not easy to find the generating function for $\mathcal{EO}^{(1)}(n)$
following a similar line of proving \eqref{gf2-mock} and \eqref{gf3-mock}.
\end{remark}

Let $\nu(-q):=\sum_{n\ge 0}p_{\nu}(n)q^n$, then it was shown by Andrews, Dixit, and Yee~\cite[Theorem.~4.1]{ADY}
that $p_{\nu}(n)$ counts the number of distinct partitions of $n$ in which all odd parts are less than twice the
smallest part.\footnote{Actually, when such a nonempty partition contains no odd parts, it will be counted twice
in $p_{\nu}(n)$. For instance, $p_{\nu}(6)=4$, counting both partitions $(6)$ and $(4,2)$ twice. This correction
has been confirmed with Yee \cite{Yeep}.} Thanks to the new connections made in Theorem~\ref{thm:mock-theta}, we
are rewarded with an alternative interpretation of $p_{\nu}(n)$. Note that the even $n$ case of the following
result is essentially rephrasing Theorem 1.1 of \cite{and2}. From this perspective, the next corollary is a
companion to Theorem 1.1 of \cite{and2}.

\begin{corollary}
For any $n\ge 0$, $p_\nu(n)$ also counts the number of partitions of $n$ in $\mathfrak{EO}$, such that when $n$
is even, then only the largest even part appears an odd number of times; when $n$ is odd, then all parts are odd
and only the largest odd part appears an odd number of times.
\end{corollary}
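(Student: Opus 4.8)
The plan is to recast the corollary as a single generating-function identity and then to split that identity according to the parity of $n$. Writing $\nu(-q)=\sum_{n\ge 0}p_\nu(n)q^n$, I first observe that the two partition classes named in the corollary are precisely the sets already studied in this section: for even $n$ the class ``$\lambda\in\fEO$ with only the largest even part appearing an odd number of times'' is exactly $\bfEO_n$, while for odd $n$ the class ``$\lambda\in\fEO$ with all parts odd and only the largest odd part appearing an odd number of times'' is exactly $\fEO^{(3)}_n$. Moreover a weight-parity count shows $\bEO(m)=0$ for odd $m$ (in a partition of $\bfEO$ the largest even part contributes an even amount, and every other part, even or odd, occurs an even number of times, so $m$ is even) and $\EO^{(3)}(m)=0$ for even $m$ (the largest odd part occurs an odd number of times, forcing $m$ odd). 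Hence the corollary is equivalent to the assertion
\begin{align*}
\nu(-q)=\sum_{m\ge 0}\bEO(m)q^m+\sum_{m\ge 0}\EO^{(3)}(m)q^m,
\end{align*}
the first sum being supported on even exponents and the second on odd exponents.

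The even part I would dispose of at once using the existing literature. For any power series $f$, its even-degree part is $\tfrac12\big(f(q)+f(-q)\big)$; applying this with $f(q)=\nu(-q)$ (so $f(-q)=\nu(q)$), the even-degree part of $\nu(-q)$ is $\tfrac12\big(\nu(-q)+\nu(q)\big)$. By Theorem~1.1 of \cite{and2} (the identity already invoked in the proof of Theorem~\ref{thm:mock-theta}) together with Corollary~3.2 of \cite{and2},
\begin{align*}
\tfrac12\big(\nu(q)+\nu(-q)\big)=\frac{(q^4;q^4)_\infty^3}{(q^2;q^2)_\infty^2}=\sum_{m\ge 0}\bEO(m)q^m.
\end{align*}
Comparing coefficients at even $n$ gives $p_\nu(n)=\bEO(n)$, which is exactly the even-$n$ assertion; note that this half requires no new work.

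It remains to treat the odd part, and this is where the real content lies. The odd-degree part of $\nu(-q)$ equals $\tfrac12\big(\nu(-q)-\nu(q)\big)$, so by \eqref{gf3-mock} the odd-$n$ assertion $p_\nu(n)=\EO^{(3)}(n)$ is equivalent to the single identity
\begin{align*}
\tfrac12\big(\nu(-q)-\nu(q)\big)=q\!\:\omega(q^2),
\end{align*}
equivalently $\nu(-q)=\nu(q)+2q\!\:\omega(q^2)$, or (combining with the even part) $\nu(-q)=\tfrac{(q^4;q^4)_\infty^3}{(q^2;q^2)_\infty^2}+q\!\:\omega(q^2)$. This is the natural odd-degree companion to Andrews' Theorem~1.1, and establishing it is the crux of the proof.

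To prove this identity I would start from the defining series \eqref{def:nu} and \eqref{def:omega}. Putting $Q=q^2$ one has $\nu(-q)=\sum_{n\ge 0}Q^{\binom{n+1}{2}}/(q;Q)_{n+1}$ and $\nu(q)=\sum_{n\ge 0}Q^{\binom{n+1}{2}}/(-q;Q)_{n+1}$, whose difference can be put over the common denominator $(q;Q)_{n+1}(-q;Q)_{n+1}=(q^2;q^4)_{n+1}$, while $q\!\:\omega(q^2)=\sum_{n\ge 0}q^{4n^2+4n+1}/(q^2;q^4)_{n+1}^2$. The main obstacle is precisely this resummation: the $\omega$-side carries a squared denominator in base $q^4$ whereas the $\nu$-side has a single denominator in base $q^2$, so no termwise match is possible and one genuinely needs a basic-hypergeometric transformation (a Rogers--Fine or Heine-type transformation), or else one may cite the corresponding relation among the third order mock theta functions $\nu$ and $\omega$ from the literature (see \cite{ADY} and its references). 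As an alternative route in the combinatorial spirit of this paper, one can instead argue the odd-$n$ case bijectively: for odd $n$ the quantity $p_\nu(n)$ counts, by \cite{ADY}, the distinct partitions of $n$ whose odd parts are all less than twice the smallest part, and since $n$ is odd every such partition contains an odd part, so the double-counting correction of \cite{ADY} never intervenes; a weight-preserving bijection between these partitions and $\fEO^{(3)}_n$ would then finish the proof without recourse to any mock-theta identity.
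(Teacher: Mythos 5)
Your proposal is correct and follows essentially the same route as the paper: both arguments reduce the corollary to the generating functions of Theorem~\ref{thm:mock-theta} (equivalently Corollary~3.2 and Theorem~1.1 of \cite{and2}) combined with the classical relation between $\nu$ and $\omega$, your even/odd split of $\nu(-q)$ being only a cosmetic reorganization of the paper's summing of \eqref{gf2-mock} and \eqref{gf3-mock}. The one point worth adding is that the identity you flag as the crux, $\nu(-q)=\tfrac{(q^4;q^4)_\infty^3}{(q^2;q^2)_\infty^2}+q\!\:\omega(q^2)$, needs no Rogers--Fine or Heine-type transformation at all: it is exactly Watson's identity \eqref{id1-nu-omega} from \cite[p.~72]{Wat} with $q$ replaced by $-q$ (the right-hand side being even in $q$), which is precisely how the paper derives \eqref{id2-nu-omega} and closes the argument.
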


\begin{proof}
According to \cite[p. 72]{Wat}, we find that
\begin{align}\label{id1-nu-omega}
\nu(q)+q\!\:\omega(q^2)=\dfrac{(q^4;q^4)_\infty^3}{(q^2;q^2)_\infty^2}.
\end{align}
Replacing $q$ by $-q$ yields
\begin{align}\label{id2-nu-omega}
\nu(-q)-q\!\:\omega(q^2)=\dfrac{(q^4;q^4)_\infty^3}{(q^2;q^2)_\infty^2}.
\end{align}
In view of \eqref{gf2-mock}--\eqref{id2-nu-omega},
\begin{align*}
\sum_{n=1}^\infty\mathcal{EO}^{(2)}(n)q^n+\sum_{n=1}^\infty\mathcal{EO}^{(3)}(n)q^{n+1}
=q\nu(-q)=\sum_{n=0}^\infty p_\nu(n)q^{n+1},
\end{align*}
from which we obtain
\begin{align*}
\mathcal{EO}^{(2)}(n+1)+\mathcal{EO}^{(3)}(n)=p_\nu(n),\quad \text{for }n\ge 0.
\end{align*}
Finally, we observe that the powers of $q$ in both $q\big(\nu(q)+\nu(-q)\big)/2$ and $q\!\:\omega(q^2)$ are odd,
which results in $\EO^{(2)}(2m)=\EO^{(3)}(2m)=0$ for every integer $m\ge 0$. Now we apply the identity
$\mathcal{EO}^{(2)}(n+1)=\overline{\mathcal{EO}}(n)$ to finish the proof.
\end{proof}

%%%%%%%%%%%%%%%%%%%%%%%%
\section{Stable sets in \texorpdfstring{$\mathfrak{OE}$}{OE}}\label{sec:OE}
%%%%%%%%%%%%%%%%%%%%%%%%
In the first subsection, we present a proof of Theorem \ref{thm:bOE-srank-gf} and characterize completely the
parity of $\bOE(n)$. We consider further subsets of $\bfOE$ in subsection~\ref{subsec:bfOE-refine}.

\subsection{Results for \texorpdfstring{$\bfOE$}{overlined-OE}}\label{subsec:bfOE}

For a partition $\lambda\in\bfOE$, its profile word $w_{\lambda}$ begins and ends with an odd number $e_1$ and
$n_k$ respectively, and contains precisely one consecutive odd pair $(n_i,e_{i+1})$, with remaining letters all
being even. This characterization is seen to be invariant under conjugation thus $\bfOE$ is stable. Then, with
Observation \ref{ob:key} in mind, we see \eqref{gf:boe}, which we prove next, parallels Andrews' generating
function \eqref{gf:bEO(m,n)} of $\bEO(m,n)$ nicely.

\begin{proof}[Proof of Theorem \ref{thm:bOE-srank-gf}]
Recall that for each $\lambda\in\bfOE$, it contains both even and odd parts, each odd part is smaller than each
even part, and ONLY the largest odd part and the largest even part appears an odd number of times. Suppose
$\lambda$ (resp.~$\lambda'$) has $2n-1$ (resp.~$2m-1$) even parts, for certain $n,m\ge 1$. From these
constraints we can uniquely dissect $\lambda$ into five pieces; see Fig.~\ref{fig:boe-dissect} for an
illustration. The top-left rectangle contains $(2n-1)\times(2m-1)$ cells which contribute $q^{(2m-1)(2n-1)}$
to the generating function; a horizontal strip representing the largest odd part of $\lambda$ with contribution
$zq^{2m-1}$; a vertical strip representing the largest odd part of $\lambda'$ with contribution
$z^{-1}q^{2n-1}$; a subpartition $\alpha$ below the horizontal strip; and a subpartition $\beta$ to the right
of the vertical strip. Note that $\alpha$ is a partition into odd parts each occurring an even number of times,
with the largest part no greater than $2m-1$, and its contribution is $z^{\#(\alpha)}q^{|\alpha|}$. Similarly,
$\beta$'s conjugation $\beta'$ is also a partition into odd parts each occurring an even number of times, with
the largest part no greater than $2n-1$, and its contribution is $z^{-\#(\beta')}q^{|\beta'|}$. Finally, it
suffices to note that all such $\alpha$'s (resp.~$\beta$'s) are generated by $1/(z^2q^2;q^4)_m$
(resp.~$1/(q^2/z^2;q^4)_n$). Putting together all five pieces we arrive at \eqref{gf:boe}.
\end{proof}

\begin{figure}[tbh]
\begin{ferrers}[0.7]
%\expertmode{8}{-8}
\addsketchrows{12+12+12+10+10+7+5+5+3+3+3+3}
\addline{4}{0}{4}{-2.5}
\addline{3.5}{0}{3.5}{-2.5}
\addline{0}{-2.5}{3.5}{-2.5}
\addline{0}{-3}{2.5}{-3}
\addtext{5}{-1}{$\beta$}
\addtext{1}{-4}{$\alpha$}
\addtext{-1}{-1}{$2n-1$}
\addtext{2}{.6}{$2m-1$}
\end{ferrers}
\caption{Dissection of a partition $\lambda\in\bfOE$}
\label{fig:boe-dissect}
\end{figure}

Next, we study the parity of $\overline{\mathcal{OE}}(n)$, in the hope of getting a complete characterization
analogous to Corollary \ref{cor:bEOparity}.

Another third order mock theta function due to Watson \cite[p. 62]{Wat} is given by
\begin{align}\label{def:psi}
\psi^{(3)}(q)=\sum_{n=1}^\infty\dfrac{q^{n^2}}{(q;q^2)_n}:=\sum_{n=1}^\infty p_\psi(n)q^n.
\end{align}
Utilizing some techniques from analytic and algebraic number theory, Wang \cite[Theorem 4.2]{Wang}
established the following complete characterization modulo 2 for $p_\psi(n)$:
\begin{align}
p_\psi(m)\equiv1\pmod{2}~\Longleftrightarrow~24m-1=p^{4\alpha+1}k^2\;\;
\textrm{for~some~prime~$p$~coprime~to~$k$}.\label{cha-mod-2}
\end{align}

Relying on Wang's result, we are able to fully characterize $\bOE(n)$ modulo $2$.

\begin{theorem}\label{thm:bOEparity}
For any $n\geq1$,
\begin{align}\label{bOEparity}
\bOE(n)\equiv
\begin{cases}
1 \pmod 2, &\textrm{if~$n\equiv3\pmod4$}\;\;\textrm{and}\;\;6n+5=p^{4\alpha+1}k^2\\
  &\textrm{for~some~prime~$p$~coprime~to~$k$},\\
0 \pmod 2, &\textrm{otherwise}.
\end{cases}
\end{align}
\end{theorem}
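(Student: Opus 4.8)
The plan is to follow the template of Corollary~\ref{cor:bEOparity}: invoke Proposition~\ref{Prop:conj} to replace $\bOE(n)$ modulo~$2$ by the number $\bOE_c(n)$ of self-conjugate partitions in $\bfOE_n$, produce a clean generating function for $\bOE_c(n)$, and match it against the mock theta function $\psi^{(3)}(q)$ so that Wang's criterion~\eqref{cha-mod-2} applies.

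First I would specialize the dissection from the proof of Theorem~\ref{thm:bOE-srank-gf} to the self-conjugate case. Reflecting $\lambda$ across its main diagonal fixes $\lambda$, so the number $2n-1$ of even parts of $\lambda$ equals the number $2m-1$ of even parts of $\lambda'$, forcing $m=n$; the central rectangle becomes a $(2n-1)\times(2n-1)$ square, the horizontal and vertical strips (each of length $2n-1$) are interchanged, and the two subpartitions $\alpha$ and $\beta$ are forced to be mutual conjugates, $\beta=\alpha'$. Hence a self-conjugate partition in $\bfOE$ is determined by an integer $n\ge 1$ together with a free partition $\alpha$ into odd parts $\le 2n-1$, each of even multiplicity. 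Reading off sizes, the core (square plus the two strips) contributes $(2n-1)(2n+1)=4n^2-1$ cells, while $\alpha$ and $\beta$ together contribute $2|\alpha|$, so that, with $1/(q^4;q^8)_n$ enumerating the admissible $\alpha$ weighted by $q^{2|\alpha|}$,
\begin{align*}
\sum_{n\ge 1}\bOE_c(n)q^n=\sum_{n=1}^\infty\frac{q^{4n^2-1}}{(q^4;q^8)_n}.
\end{align*}

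Next I would recognize the right-hand side as a dilation of $\psi^{(3)}$. Replacing $q$ by $q^4$ in~\eqref{def:psi} gives $\psi^{(3)}(q^4)=\sum_{n\ge1}q^{4n^2}/(q^4;q^8)_n$, whence $\sum_{n\ge1}\bOE_c(n)q^n=q^{-1}\psi^{(3)}(q^4)=\sum_{m\ge1}p_\psi(m)q^{4m-1}$. Comparing coefficients shows $\bOE_c(n)=0$ unless $n\equiv 3\pmod 4$, and $\bOE_c(n)=p_\psi(m)$ with $m=(n+1)/4$ otherwise. Combining this with $\bOE(n)\equiv\bOE_c(n)\pmod 2$ from~\eqref{cong:sc-parity}, the case $n\not\equiv 3\pmod 4$ is immediately even. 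For $n\equiv 3\pmod 4$ I would substitute $m=(n+1)/4$ into Wang's characterization~\eqref{cha-mod-2}: since $24m-1=6(n+1)-1=6n+5$, the condition $p_\psi(m)\equiv 1\pmod 2$ becomes precisely $6n+5=p^{4\alpha+1}k^2$ for some prime $p$ coprime to $k$, which is the assertion.

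The step I expect to be the main obstacle is the structural one: verifying rigorously that the self-conjugate members of $\bfOE$ are parameterized exactly by the pair $(n,\alpha)$ with $\beta=\alpha'$ forced, and that the total size equals $4n^2-1+2|\alpha|$. Concretely, one must confirm via Proposition~\ref{prop:conj and rev} that a palindromic profile word compatible with the ``exactly one consecutive odd pair, all other letters even'' description of $\bfOE$ forces that odd pair to sit at the center, and one must rule out any degenerate or boundary configuration from being miscounted. A small check, e.g.\ $\bOE_c(3)=1$ realized by the single partition $(2,1)$ against $6\cdot 3+5=23$ prime, provides a reassuring test. Once this dissection is secured, the coefficient extraction and the appeal to~\eqref{cha-mod-2} are routine.
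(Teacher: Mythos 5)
Your proposal is correct and follows essentially the same route as the paper: reduce to self-conjugate partitions via Proposition~\ref{Prop:conj}, obtain $\sum_{n\ge1}\bOE_c(n)q^n=\sum_{n\ge1}q^{4n^2-1}/(q^4;q^8)_n=q^{-1}\psi^{(3)}(q^4)$, and conclude via Wang's criterion \eqref{cha-mod-2} with $24m-1=6n+5$. The only difference is that you derive the self-conjugate generating function explicitly from the dissection of Theorem~\ref{thm:bOE-srank-gf} (correctly, with $m=n$ and $\beta=\alpha'$ forced), whereas the paper simply asserts it.
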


\begin{proof}
By Proposition~\ref{Prop:conj}, it suffices to consider only self-conjugate partitions in $\bfOE$. Their
generating function is given by
\begin{align*}
\sum_{n=0}^\infty\bOE_c(n)q^n &= q^{-1}\sum_{n=1}^{\infty}\frac{q^{4n^2}}{(q^4;q^8)_n}
=q^{-1}\sum_{m=1}^{\infty}p_{\psi}(m)q^{4m}\qquad\textrm{(by~\eqref{def:psi})}.
\end{align*}
It follows that
\begin{align}
\bOE(n)\equiv
\begin{cases}
p_\psi(m)\pmod2, &\textrm{if}\;\;n=4m-1,\\
\quad0\quad\;\pmod 2, &\textrm{otherwise.}
\end{cases}\label{cong-relate}
\end{align}
Now \eqref{bOEparity} follows from \eqref{cong-relate} and \eqref{cha-mod-2} immediately.
\end{proof}

With the aid of Theorem~\ref{thm:bOEparity}, it is possible to deduce Ramanujan-type congruence for a certain
arithmetic progression. We record one example below.

\begin{corollary}\label{thm:psi-mod2}
For any $n>0$ and $n\not\equiv 0\pmod 5$,
\begin{align}
\bOE(20n-5)&\equiv0\pmod 2.\label{parity cong:bOE}
\end{align}
\end{corollary}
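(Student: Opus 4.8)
The plan is to reduce the statement for $\bOE(20n-5)$ to a statement about $p_\psi$ via the congruence \eqref{cong-relate} established in the proof of Theorem~\ref{thm:bOEparity}, and then to invoke Wang's characterization \eqref{cha-mod-2}. First I would observe that $20n-5 = 4(5n-1)-1$, so by \eqref{cong-relate} we have $\bOE(20n-5) \equiv p_\psi(5n-1) \pmod 2$. Thus it suffices to show that $p_\psi(5n-1)$ is even whenever $n\not\equiv 0\pmod 5$.

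By \eqref{cha-mod-2}, $p_\psi(5n-1)$ is odd precisely when $24(5n-1)-1 = 120n-25 = 5(24n-5)$ is of the form $p^{4\alpha+1}k^2$ for some prime $p$ coprime to $k$. So the key step is to analyze the factor of $5$ in $5(24n-5)$. I would examine the $5$-adic valuation: since $n\not\equiv 0\pmod 5$, we have $24n-5 \equiv 24n \equiv 4n \not\equiv 0 \pmod 5$, so $5 \nmid (24n-5)$, and hence $v_5\big(5(24n-5)\big) = 1$ exactly. For the number $N := 5(24n-5)$ to equal $p^{4\alpha+1}k^2$ with $p$ prime and $\gcd(p,k)=1$, the exponent of each prime in $N$ is either $2\cdot(\text{exponent in }k)$ — hence even — for primes other than $p$, or $4\alpha+1$ — hence odd and $\equiv 1\pmod 4$ — for the prime $p$ itself. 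Since $v_5(N)=1$ is odd, we must have $p=5$ and $4\alpha+1 = 1$, forcing $\alpha=0$; this means $N = 5k^2$ with $\gcd(5,k)=1$, i.e.\ $24n-5 = k^2$ must be a perfect square coprime to $5$.

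The remaining obstacle, and the crux of the argument, is to rule this out: I need to show that $24n-5$ is never a perfect square. This follows from a quadratic residue obstruction modulo a small modulus. Working modulo $3$ gives $24n-5 \equiv -5 \equiv 1 \pmod 3$, which is a square and so is unhelpful; instead I would work modulo $8$, where $24n-5 \equiv -5 \equiv 3 \pmod 8$. Since every perfect square is congruent to $0$, $1$, or $4$ modulo $8$, the value $3$ is never a square, so $24n-5 = k^2$ is impossible. Therefore $N=5(24n-5)$ can never be written in the form $p^{4\alpha+1}k^2$, so by \eqref{cha-mod-2} the quantity $p_\psi(5n-1)$ is even, giving \eqref{parity cong:bOE}.

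I expect the main subtlety to lie not in any individual step but in correctly bookkeeping the exponent condition in Wang's characterization: one must be careful that the odd-valuation prime in $p^{4\alpha+1}k^2$ is forced to be $5$ (because $5$ appears to an odd power), and then that the resulting square condition $24n-5=k^2$ is obstructed modulo $8$. A concise writeup would state the reduction $20n-5 = 4(5n-1)-1$, cite \eqref{cong-relate} and \eqref{cha-mod-2}, compute $24(5n-1)-1 = 5(24n-5)$, note $5\nmid(24n-5)$ from $n\not\equiv 0\pmod 5$, and close with the mod~$8$ nonresidue argument; no lengthy computation is needed.
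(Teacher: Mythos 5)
Your proof is correct and follows essentially the same route as the paper: reduce via \eqref{cong-relate} to a parity statement for $p_\psi$, then apply Wang's characterization \eqref{cha-mod-2} by noting that the $5$-adic valuation of $24(5n-1)-1=5(24n-5)$ equals $1$, which forces $p=5$, $\alpha=0$, and finally rule out $24n-5$ being a perfect square by a quadratic-residue obstruction. The only cosmetic differences are that you treat all residues of $n$ modulo $5$ uniformly where the paper splits into the four cases $n=5m+i$, and you work modulo $8$ where modulo $4$ already suffices.
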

\begin{proof}
Writing $n=5m+i$ for $i=1,2,3,4$, we see that \eqref{parity cong:bOE} is equivalent to
\begin{align*}
\bOE(100m+15)\equiv\bOE(100m+35) &\equiv0\pmod 2,\\
\bOE(100m+55)\equiv\bOE(100m+75) &\equiv0\pmod 2.
\end{align*}
In view of the relation \eqref{cong-relate}, we only need to show that
\begin{align}
p_\psi(25m+4)\equiv p_\psi(25m+9)\equiv p_{\psi}(25m+14)\equiv
p_\psi(25m+19)\equiv0\pmod 2.\label{parity cong:psi}
\end{align}
Here we only prove the congruence $p_{\psi}(25m+4)\equiv0\pmod{2}$, because the proofs of the remaining cases
are quite similar.

For any $m\geq0$, note that $24(25m+4)-1=5(120m+19)$, $5\nmid(120m+19)$ and $120m+19$ can not be a square (a
square is congruent to $0$ or $1$ modulo $4$). According to \eqref{cha-mod-2}, we obtain the desired congruence.
\end{proof}

\begin{remark}
Quite recently, Chen and Garvan \cite[Eq. (4.31)]{CG22} derived an infinite family of congruences modulo $4$
satisfied by $p_\psi(n)$. One easily derives from their results the following strengthening of
\eqref{parity cong:psi}. For any $n\geq0$,
\begin{align*}
p_\psi(25n+9)\equiv p_\psi(25n+14)\equiv0\pmod{4}.
\end{align*}
From this perspective, it might be interesting to pursue mod $4$ results for $\bOE(n)$.
\end{remark}

\subsection{Subsets of \texorpdfstring{$\bfOE$}{overlined-OE}}\label{subsec:bfOE-refine}
Given a partition $\lambda$, let $\cE(\lambda)$ be the number of even parts in $\lambda$. We further refine
$\bfOE$ as follows. Let $k\ge 0$ and
\begin{align*}
\bfOE_{k}:=\{\lambda\in\bfOE: \cE(\lambda)-\cE(\lambda')=2k\}.
\end{align*}
Noting that each partition $\lambda\in\bfOE$ must be a partition of an odd number, we define $p_k(0)=p_k(1)=0$,
and for all $n\ge 2$ let
\begin{align*}
p_k(n)&:=|\{\lambda\in\bfOE_k:|\lambda|=2n-1\}|.
\end{align*}

The following explicit generating functions for $p_0(n)$ and $p_1(n)$ presage their connection with the mock
theta function $\omega(q)$ (see Corollary \ref{Cor:relat:mock-theta}).

\begin{corollary}
There holds
\begin{align}
\sum_{n=0}^\infty p_0(n)q^n &=\sum_{n=1}^{\infty}\frac{q^{2n^2}}{(q;q^2)_{n}^2},\label{gf:p0}\\
\sum_{n=0}^\infty p_1(n)q^n &=\sum_{n=1}^{\infty}\frac{q^{2n^2+2n}}{(q;q^2)_{n}(q;q^2)_{n+1}}.\label{gf:p1}
\end{align}
\end{corollary}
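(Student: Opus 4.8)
The plan is to read the two desired generating functions straight off Theorem~\ref{thm:bOE-srank-gf} by specializing the bivariate series \eqref{gf:boe}, once I understand what the summation indices $m$ and $n$ there actually record. The crucial observation is that they are not merely auxiliary: in the dissection performed in the proof of Theorem~\ref{thm:bOE-srank-gf}, a partition $\lambda\in\bfOE$ contributing to the $(m,n)$-term is exactly one with $\cE(\lambda')=2m-1$ and $\cE(\lambda)=2n-1$ even parts (the vertical strip of length $2n-1$ is the largest odd part of $\lambda'$, which equals $\cE(\lambda)$, and symmetrically for $m$). Hence $\cE(\lambda)-\cE(\lambda')=2(n-m)$, so that $\lambda\in\bfOE_k$ if and only if the index pair satisfies $n=m+k$. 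If one prefers not to invoke the dissection, this identification can be re-derived from the profile word of $\lambda$ via Proposition~\ref{prop:conj and rev}: the single interior odd pair separates the odd part sizes (below) from the even ones (above), and a short count shows the number of even parts of $\lambda$ equals the largest odd part of $\lambda'$.

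With this in hand, I would first isolate $\bfOE_k$ inside \eqref{gf:boe}. Since $\bfOE_k$ is counted without any $\srank$-weighting, I set $z=1$ in \eqref{gf:boe} and retain only the diagonal band $n=m+k$, which gives
\begin{align*}
\sum_{\lambda\in\bfOE_k}q^{|\lambda|}=\sum_{m=1}^{\infty}\frac{q^{4m(m+k)-1}}{(q^2;q^4)_m\,(q^2;q^4)_{m+k}},
\end{align*}
where the exponent comes from $(2n-1)(2m-1)+(2m-1)+(2n-1)=4mn-1$ specialized at $n=m+k$, exactly as in the derivation of \eqref{gf:boe}.

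Next I would pass from the size statistic to the argument of $p_k$. Every $\lambda\in\bfOE$ has odd size, and $p_k(N)$ counts those $\lambda\in\bfOE_k$ with $|\lambda|=2N-1$; hence $\sum_N p_k(N)q^N$ is obtained from $\sum_{\lambda\in\bfOE_k}q^{|\lambda|}$ by the substitution $q\mapsto q^{1/2}$ followed by multiplication by $q^{1/2}$, which replaces each $q^{|\lambda|}=q^{2N-1}$ by $q^N$. Since $(q^2;q^4)_m\mapsto(q;q^2)_m$ and $q^{4m(m+k)-1}\mapsto q^{2m(m+k)-1/2}$ under this rescaling, I obtain the uniform formula
\begin{align*}
\sum_{N=0}^{\infty}p_k(N)q^{N}=\sum_{m=1}^{\infty}\frac{q^{2m(m+k)}}{(q;q^2)_m\,(q;q^2)_{m+k}}.
\end{align*}
Setting $k=0$ yields \eqref{gf:p0} and setting $k=1$ yields \eqref{gf:p1}, after renaming $m$ to $n$.

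The only genuinely delicate step is the first one: I must be sure that the indices $m,n$ of \eqref{gf:boe} truly record $\cE(\lambda')$ and $\cE(\lambda)$, so that $\bfOE_k$ is precisely the band $n-m=k$, and that this is the right statistic rather than the $\srank$ which the variable $z$ tracks. Everything after that is bookkeeping: killing the $z$-weight, restricting the band, and carrying out the $q\mapsto q^{1/2}$ rescaling forced by $|\lambda|=2N-1$. I would take care throughout to keep the summation index $m$ distinct from the argument $N$ of $p_k(N)$, since both are easy to conflate with the letter $n$ appearing in the statements \eqref{gf:p0}--\eqref{gf:p1}.
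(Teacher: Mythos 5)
Your proposal is correct and follows essentially the same route as the paper: identify the indices $m,n$ in \eqref{gf:boe} with $\cE(\lambda')=2m-1$ and $\cE(\lambda)=2n-1$ via the dissection in the proof of Theorem~\ref{thm:bOE-srank-gf}, restrict to the diagonal $n=m+k$, set $z=1$, and rescale $q\to q^{1/2}$ (your explicit extra factor of $q^{1/2}$ correctly accounts for $|\lambda|=2N-1$, a bookkeeping step the paper leaves implicit). The uniform formula for general $k$ is a nice touch but the argument is the same.
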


\begin{proof}
The length of the vertical (resp.~horizontal) strip that arises in the proof of Theorem~\ref{thm:bOE-srank-gf}
is precisely $\cE(\lambda)$ (resp.~$\cE(\lambda')$). Therefore, by setting $n=m$ and putting $z\to 1$,
$q\to q^{1/2}$ in \eqref{gf:boe}, we immediately arrive at \eqref{gf:p0}. Similarly, with $n=m+1$ we can
deduce \eqref{gf:p1}.
\end{proof}

\begin{remark}
Combining \eqref{def:psi} and \eqref{gf:p0}, we obtain $p_0(2n)\equiv p_\psi(n)\pmod{2}$ immediately.
Alternatively, this congruence could be combinatorially justified as we derive \eqref{cong-relate}.
\end{remark}

Let $p_\omega(n)$ denote the number of partitions of $n$ in which each odd part is less than
twice the smallest part. Just like the partition-theoretical interpretation for $p_{\nu}(n)$, it was first
noticed in the same paper by Andrews, Dixit, and Yee \cite[Theorem~3.1]{ADY} that
\begin{align*}
q\!\:\omega(q)=\sum_{n=1}^{\infty}p_{\omega}(n)q^n.
\end{align*}

\begin{corollary}\label{Cor:relat:mock-theta}
For any $n\geq1$,
\begin{align}
p_0(n)+p_1(n-1)+1=p_\omega(n).\label{relat-iden}
\end{align}
\end{corollary}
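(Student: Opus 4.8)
The plan is to translate \eqref{relat-iden} into an identity of generating functions and then verify the latter by a short $q$-series manipulation. Multiplying \eqref{relat-iden} by $q^n$ and summing over $n\ge 1$, and invoking the generating functions \eqref{gf:p0} and \eqref{gf:p1} already in hand, the definition $q\,\omega(q)=\sum_{n\ge 1}p_\omega(n)q^n$, and the elementary $\sum_{n\ge 1}q^n=q/(1-q)$ (this last series is where the constant $1$ in \eqref{relat-iden} is absorbed), the claim becomes
\[
\sum_{m=1}^\infty\frac{q^{2m^2}}{(q;q^2)_m^2}
+\sum_{m=1}^\infty\frac{q^{2m^2+2m+1}}{(q;q^2)_m(q;q^2)_{m+1}}
+\frac{q}{1-q}=q\,\omega(q),
\]
where the middle sum arises as $q\sum_{m\ge 0}p_1(m)q^m$ after the shift $p_1(n-1)\mapsto p_1(m)$.

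First I would put the right-hand side in a comparable shape: substituting $n\mapsto m-1$ in \eqref{def:omega} gives $q\,\omega(q)=\sum_{m\ge 1}q^{2m^2-2m+1}/(q;q^2)_m^2$, so that both the $p_0$-sum and $q\,\omega(q)$ are supported on the common denominator $(q;q^2)_m^2$. Transposing the $p_0$-sum and combining it termwise with $q\,\omega(q)$ produces $\sum_{m\ge 1}\bigl(q^{2m^2-2m+1}-q^{2m^2}\bigr)/(q;q^2)_m^2$.

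The heart of the computation is then an elementary simplification: writing $q^{2m^2-2m+1}-q^{2m^2}=q^{2m^2-2m+1}(1-q^{2m-1})$ and using $(q;q^2)_m=(1-q^{2m-1})(q;q^2)_{m-1}$, one cancels a factor and obtains $\sum_{m\ge 1}q^{2m^2-2m+1}/\bigl((q;q^2)_{m-1}(q;q^2)_m\bigr)$. Reindexing $m\mapsto j+1$ turns the exponent into $2j^2+2j+1$ and the summand into $q^{2j^2+2j+1}/\bigl((q;q^2)_j(q;q^2)_{j+1}\bigr)$ for $j\ge 0$; the $j=0$ term is exactly $q/(1-q)$, while the $j\ge 1$ tail is precisely the shifted $p_1$-sum appearing on the left. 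Matching the two sides finishes the argument.

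I do not expect a serious obstacle here, since the tools \eqref{gf:p0}, \eqref{gf:p1}, \eqref{def:omega} reduce everything to bookkeeping. The one place demanding care is the index shift that peels off the $j=0$ summand $q/(1-q)$, as this is what accounts for the somewhat mysterious additive constant $1$ in \eqref{relat-iden}; keeping the ranges of summation and the two reindexings ($n\mapsto m-1$ in $\omega$ and $m\mapsto j+1$ at the end) consistent is the only subtlety. It would also be natural to ask for a bijective proof in the spirit of the remark following Theorem \ref{thm:mock-theta}, but the generating-function route above is the most direct given what has been established.
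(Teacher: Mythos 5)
Your proposal is correct and follows essentially the same route as the paper: both reduce \eqref{relat-iden} to a $q$-series identity via \eqref{gf:p0}, \eqref{gf:p1} and \eqref{def:omega}, and both hinge on the same cancellation $(q;q^2)_m=(1-q^{2m-1})(q;q^2)_{m-1}$ followed by an index shift that peels off the $q/(1-q)$ term. The only difference is cosmetic — you subtract the $p_0$-sum from $q\,\omega(q)$, whereas the paper adds the three series over the common denominator $(q;q^2)_{n+1}^2$ and recognizes the result as $q\,\omega(q)$.
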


\begin{proof}
It follows form \eqref{gf:p0} and \eqref{gf:p1} that
\begin{align*}
 &\sum_{n=0}^\infty p_0(n)q^n+\sum_{n=0}^\infty p_1(n)q^{n+1}+\sum_{n=1}^\infty q^n\\
 &\quad=\sum_{n=0}^\infty\frac{q^{2n^2+4n+2}}{(q;q^2)_{n+1}^2}
+\sum_{n=1}^{\infty}\frac{q^{2n^2+2n+1}}{(q;q^2)_n(q;q^2)_{n+1}}+\dfrac{q}{1-q}\\
 &\quad=\sum_{n=0}^\infty\frac{q^{2n^2+4n+2}}{(q;q^2)_{n+1}^2}+
\sum_{n=1}^{\infty}\frac{q^{2n^2+2n+1}(1-q^{2n+1})}{(q;q^2)_{n+1}^2}+\dfrac{q}{1-q}\\
 &\quad=\sum_{n=1}^\infty\dfrac{q^{2n^2+2n+1}}{(q;q^2)_{n+1}^2}+\dfrac{q^2}{(1-q)^2}+\dfrac{q}{1-q}\\
 &\quad=\sum_{n=0}^\infty\dfrac{q^{2n^2+2n+1}}{(q;q^2)_{n+1}^2}
=q\!\:\omega(q)=\sum_{n=1}^\infty p_\omega(n)q^n,
\end{align*}
Then \eqref{relat-iden} follows by comparing the coefficients of $q^n$ on both sides of the identity above.
\end{proof}

\begin{?}
Prove \eqref{relat-iden} combinatorially via their partition-theoretical interpretations.
\end{?}

% %%%%%%%%%%%%%%%%%%%%%%%%
% \section{Stable sets in other classes}\label{sec:other}
% %%%%%%%%%%%%%%%%%%%%%%%%

%%%%%%%%%%%%%%%%%%%%%%%%
\section{Final remarks}
%%%%%%%%%%%%%%%%%%%%%%%%

We conclude this paper with several conjectures to motivate further investigation.

\begin{conjecture}
Let $\ell>3$ be a prime number and $\ell\not\equiv23\pmod{24}$. Then for any $n\geq0$,
\begin{align*}
p_0\big(2\ell^2n+2\ell j+2\delta_\ell\big)\equiv0\pmod{4},
\end{align*}
where $0\leq j\leq\ell-2$ and $\delta_\ell$ is the least positive integer such that
$24\delta_\ell\equiv1\pmod{\ell}$.
\end{conjecture}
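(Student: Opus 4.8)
The plan is to work directly at the even arguments. Writing $N:=\ell^2n+\ell j+\delta_\ell$, every argument in the conjecture equals $2N$, so the claim is $p_0(2N)\equiv 0\pmod 4$. I would begin with arithmetic bookkeeping on $24N-1$: since $24\delta_\ell\equiv 1\pmod\ell$, one has $24N-1=\ell\bigl(24\ell n+24j+(24\delta_\ell-1)/\ell\bigr)$, so $\ell\mid 24N-1$. The favorable and generic situation, on which the whole argument hinges, is $\ell\,\|\,(24N-1)$; this holds throughout the admissible range $0\le j\le\ell-2$, the complementary class $\ell^2\mid 24N-1$ being genuinely more delicate.

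The second step upgrades the known congruence $p_0(2n)\equiv p_\psi(n)\pmod 2$ to a mod $4$ master formula. Starting from \eqref{gf:p0} and factoring
\[
\frac{1}{(q;q^2)_k^2}=\frac{1}{(q^2;q^4)_k}\cdot\frac{(-q;q^2)_k}{(q;q^2)_k},\qquad
\frac{(-q;q^2)_k}{(q;q^2)_k}\equiv 1+2\sum_{i=1}^{k}\frac{q^{2i-1}}{1-q^{2i-1}}\pmod 4,
\]
I would obtain, with $\psi^{(3)}(q^2)=\sum_m p_\psi(m)q^{2m}$ coming from \eqref{def:psi},
\[
\sum_{n\ge 0}p_0(n)q^n\equiv \psi^{(3)}(q^2)+2R(q)\pmod 4,\qquad
R(q):=\sum_{k\ge 1}\frac{q^{2k^2}}{(q^2;q^4)_k}\sum_{i=1}^{k}\frac{q^{2i-1}}{1-q^{2i-1}}.
\]
Extracting the coefficient of $q^{2N}$ yields the master congruence $p_0(2N)\equiv p_\psi(N)+2\,r(N)\pmod 4$, where $r(N)$ is the coefficient of $q^{2N}$ in $R(q)$.

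The third step disposes of the main term modulo $2$ and isolates the real difficulty. Exactly as in the proof of Corollary~\ref{thm:psi-mod2}, I would feed $\ell\,\|\,(24N-1)$ into Wang's characterization \eqref{cha-mod-2}: any representation $24N-1=p^{4\alpha+1}k^2$ then forces $p=\ell$ and $\alpha=0$, i.e. $24N-1=\ell k^2$ with $\ell\nmid k$, whence $k^2\equiv (24N-1)/\ell\equiv -\ell^{-1}\pmod{24}$. Since $24N-1$ is odd the putative $k$ is odd, so $k^2\equiv 1$ or $9\pmod{24}$, forcing $-\ell^{-1}\in\{1,9\}\pmod{24}$; as $15$ is not a unit modulo $24$, this is possible only when $\ell\equiv 23\pmod{24}$. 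Hence the hypothesis $\ell\not\equiv 23\pmod{24}$ kills the representation, $p_\psi(N)$ is \emph{even}, and the conjecture reduces to the single parity statement $\tfrac12 p_\psi(N)+r(N)\equiv 0\pmod 2$.

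The hard part will be this last parity. The term $\tfrac12 p_\psi(N)$ is the ``second layer'' of $p_\psi$, governed by the mock/modular structure of $\psi^{(3)}$, and Chen and Garvan's modular forms analysis \cite{CG22} — which already produces congruences of the shape $p_\psi\equiv 0\pmod 4$ on suitable progressions — is the natural engine for it. The genuinely new obstacle is $r(N)$: because $R(q)$ collects the \emph{cross terms} of a square, its coefficients are not those of a single theta function, so there is no elementary reason for $r(N)\equiv \tfrac12 p_\psi(N)\pmod 2$ on the target progressions. My plan would be to realize the entire right-hand side $\psi^{(3)}(q^2)+2R(q)$, after the substitution that records $24N-1$, as a component of a holomorphic modular form of half-integral weight modulo $4$, and then to apply the Hecke operator $T_{\ell^2}$ in the spirit of \cite{CG22}, so that $\ell\,\|\,(24N-1)$ together with $\ell\not\equiv 23\pmod{24}$ forces the relevant eigenvalue to vanish mod $4$. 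Fitting the correction series $R(q)$ into this modular framework, and separately treating the borderline class $\ell^2\mid 24N-1$, is where I expect essentially all of the work to lie.
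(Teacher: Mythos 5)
First, be aware that the statement you are proving is one of the open conjectures in Section 5 of the paper: the authors give no proof, so there is no argument of theirs to compare against, and your proposal does not supply one either --- by your own account the decisive parity statement $\tfrac12 p_\psi(N)+r(N)\equiv 0\pmod 2$ is deferred to a modular-forms argument that is never carried out. What you do prove is correct and worth keeping: the factorization $1/(q;q^2)_k^2=(q^2;q^4)_k^{-1}\cdot(-q;q^2)_k/(q;q^2)_k$ and the mod $4$ expansion of the second factor are right, so the master congruence $p_0(2N)\equiv p_\psi(N)+2r(N)\pmod 4$ does follow from \eqref{gf:p0} and \eqref{def:psi}; and your deduction that $\ell\,\|\,(24N-1)$ together with $\ell\not\equiv23\pmod{24}$ rules out a representation $24N-1=p^{4\alpha+1}k^2$ in \eqref{cha-mod-2} is sound (the unique prime with odd exponent would have to be $\ell$, forcing $k^2\equiv-\ell^{-1}\equiv-\ell\pmod{24}$ with $k$ coprime to $6$, hence $\ell\equiv23$). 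But this is a reduction of the conjecture, not a proof of it.

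Second, there is a concretely false step on which the whole plan rests. You assert that $\ell\,\|\,(24N-1)$ ``holds throughout the admissible range $0\le j\le\ell-2$.'' The residue class of $j$ modulo $\ell$ for which $\ell^2\mid 24N-1$ is determined by $\ell j+\delta_\ell\equiv 24^{-1}\pmod{\ell^2}$, and it equals the excluded value $j=\ell-1$ precisely when $\delta_\ell=\ell-(\ell^2-1)/24$, which forces $\ell<24$. So your claim is true for $\ell\le 23$ but fails for every prime $\ell\ge 29$: e.g.\ $\ell=29$ has $\delta_{29}=23$, and $n=0$, $j=27\le\ell-2$ gives $N=806$ with $24N-1=19343=23\cdot 29^2$. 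This case is therefore not a ``borderline class'' you may postpone; it sits inside the range you must cover, and your Step 3 has nothing to say about it. Worse, in this very case $24N-1=23^{1}\cdot 29^{2}$ \emph{is} of the form $p^{4\alpha+1}k^2$, so \eqref{cha-mod-2} makes $p_\psi(806)$ odd, and the paper's own observation $p_0(2n)\equiv p_\psi(n)\pmod 2$ then makes $p_0(1612)$ odd --- i.e., taken at face value, the conjecture itself fails at $\ell=29$, $n=0$, $j=27$ and presumably needs the additional hypothesis $\ell^2\nmid 24(\ell j+\delta_\ell)-1$ (which is automatic only for $\ell\le 23$). You should check this numerically; if it is confirmed, your Steps 2--3 are the right framework for the corrected statement, but as written your proposal both asserts a false arithmetic fact and leaves the essential mod $2$ difficulty unresolved.
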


\begin{conjecture}
We have
\begin{align*}
\lim_{n\rightarrow\infty}\dfrac{\#\{m|\:p_0(m)\equiv0\pmod{2},\;\;0\leq m<n\}}{n}=\dfrac{1}{5}.
\end{align*}
\end{conjecture}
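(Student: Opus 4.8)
The plan is to first convert the parity of $p_0(m)$ into a statement about Watson's coefficients $p_\psi$, and then turn the density question into a purely analytic one about a thin multiplicative set via Wang's characterization \eqref{cha-mod-2}. For the first step I would reduce \eqref{gf:p0} modulo $2$. Using $(1-x)^2\equiv 1-x^2\pmod 2$ factor by factor gives $(q;q^2)_n^2\equiv(q^2;q^4)_n\pmod 2$, whence
\begin{align*}
\sum_{m\ge 0}p_0(m)q^m\equiv\sum_{n\ge 1}\frac{q^{2n^2}}{(q^2;q^4)_n}=\psi^{(3)}(q^2)=\sum_{\mu\ge 1}p_\psi(\mu)q^{2\mu}\pmod 2,
\end{align*}
where the middle identity is just the substitution $q\mapsto q^2$ in \eqref{def:psi}. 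Hence $p_0(m)$ is even for every odd $m$, while $p_0(2\mu)\equiv p_\psi(\mu)\pmod 2$, which refines the remark following the corollary.

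Consequently, writing $O_\psi:=\{\mu\ge 1:p_\psi(\mu)\equiv 1\pmod 2\}$ and $E:=\{m\ge 0:p_0(m)\equiv 0\pmod 2\}$, the set $E$ consists of all odd integers together with those even integers $2\mu$ for which $\mu\notin O_\psi$. Thus, provided the relevant densities exist, the natural density of $E$ equals $\tfrac12+\tfrac12(1-\kappa)=1-\tfrac12\kappa$, where $\kappa$ is the natural density of $O_\psi$. The next step is to identify $\kappa$. By Wang's characterization \eqref{cha-mod-2}, $\mu\in O_\psi$ precisely when $24\mu-1=p^{4\alpha+1}k^2$ with $p$ prime coprime to $k$; since $\mu\mapsto 24\mu-1$ is a bijection onto the progression $23\pmod{24}$, computing $\kappa$ is equivalent to finding the natural density, within that progression, of integers whose squarefree part is a single prime occurring with multiplicity $\equiv 1\pmod 4$.

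The heart of the argument, and the step I expect to be the main obstacle, is this analytic density estimate. One would count $N\le X$ with $N\equiv 23\pmod{24}$ of the form $p\,s^2$ (plus the sparser cases $p^5s^2,\,p^9s^2,\dots$) by summing $\pi$ over the admissible progressions, i.e.\ essentially evaluating $\sum_{s}\pi(X/s^2)$ via the prime number theorem in arithmetic progressions together with a sieve enforcing coprimality and the congruence. The subtle point is that this constant completely determines the answer, while the structural identity above already forces the density of $E$ to lie in $[\tfrac12,1]$ and to equal $1-\tfrac12\kappa$. Reconciling this with the conjectured value $\tfrac15$ is therefore the crux: it demands a proof that the limit (not merely the upper and lower densities) exists and an exact evaluation of $\kappa$, and it is here that the heuristic prime count -- which suggests $\kappa=0$, hence density $1$ -- must be confronted directly and the constant pinned down with care.
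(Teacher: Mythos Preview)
The statement you are addressing is a \emph{conjecture} listed in the final section of the paper; the paper offers no proof, so there is nothing to compare your argument against. What your reduction actually shows is that the conjecture, as stated, is \emph{false}. Your congruence
\[
\sum_{m\ge 0}p_0(m)q^m\equiv\psi^{(3)}(q^2)\pmod 2
\]
is correct (and is even recorded in the Remark following \eqref{gf:p1}), and it already forces $p_0(m)\equiv 0\pmod 2$ for \emph{every} odd $m$. Hence
\[
\frac{\#\{0\le m<n:p_0(m)\equiv 0\pmod 2\}}{n}\ge \frac{\#\{0\le m<n:m\text{ odd}\}}{n}\longrightarrow \frac12,
\]
which is incompatible with the asserted limit $\tfrac15$. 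There is nothing subtle to ``reconcile'': the lower bound $\tfrac12$ is unconditional, so the value $\tfrac15$ cannot be correct.

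Your further analysis sharpens this. Via \eqref{cha-mod-2}, the odd parities of $p_\psi$ correspond to integers $24\mu-1$ of the form $p^{4\alpha+1}k^2$; counting such $N\le X$ by $\sum_{k}\pi(X/k^2)$ gives $O(X/\log X)$, so $\kappa=0$ and the density of $\{m:p_0(m)\equiv 0\pmod 2\}$ is in fact $1$. The appropriate conclusion of your write-up is therefore not that ``the constant must be pinned down with care'', but that the conjecture is wrong as printed---most plausibly a typographical slip (perhaps the intended condition was a different modulus, or the intended quantity was the complementary set with a different normalization). It would be worth communicating this to the authors rather than continuing to seek a proof.
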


\begin{conjecture}
\leavevmode
\begin{enumerate}[{\rm (i)}]
\item For any $n\geq3$,
\begin{align*}
\overline{\mathcal{OE}}(2n+1)>\overline{\mathcal{EO}}(2n).
\end{align*}
\item For any $n\geq1$,
\begin{align*}
p_0(n)\ge p_1(n),
\end{align*}
where the strict inequality holds if $n\neq1$, $10$ or $13$.
\end{enumerate}
\end{conjecture}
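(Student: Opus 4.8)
The plan is to convert both inequalities into coefficient‑positivity statements for explicit $q$‑series built from the generating functions already established, and then to attack these either by a combinatorial injection or, as a fallback, by an asymptotic estimate supplemented by a finite check. Throughout I write $P_0(q)=\sum_{n\ge0}p_0(n)q^n$ and $P_1(q)=\sum_{n\ge0}p_1(n)q^n$, and for part (i) I use $F(q)=(q^4;q^4)_\infty^3/(q^2;q^2)_\infty^2=\sum_n\overline{\mathcal{EO}}(n)q^n$ together with the $z=1$ specialization $B(q)=\sum_n\bOE(n)q^n$ of \eqref{gf:boe}. Recall that $\bfEO$ is supported on even integers and $\bfOE$ on odd integers, so the two comparisons are between the ``live'' values of each function.

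For part (ii) I would isolate the summands $T_n=q^{2n^2}/(q;q^2)_n^2$ of \eqref{gf:p0} and $S_n=q^{2n^2+2n}/[(q;q^2)_n(q;q^2)_{n+1}]$ of \eqref{gf:p1}. Reusing the manipulation from the proof of Corollary~\ref{Cor:relat:mock-theta}, namely $1/[(q;q^2)_n(q;q^2)_{n+1}]=(1-q^{2n+1})/(q;q^2)_{n+1}^2$, one gets the clean telescoping relation
\begin{align*}
S_n-T_{n+1}=\dfrac{q^{2n^2+2n}\bigl(1-q^{2n+1}-q^{2n+2}\bigr)}{(q;q^2)_{n+1}^2}=:R_n(q),
\end{align*}
so that $P_1=\sum_{n\ge1}(T_{n+1}+R_n)=P_0-q^2/(1-q)^2+\sum_{n\ge1}R_n$. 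This reduces the claim to showing that
\begin{align*}
D(q)=P_0(q)-P_1(q)=\dfrac{q^2}{(1-q)^2}-\sum_{n\ge1}R_n(q)
\end{align*}
has nonnegative coefficients, the coefficient of $q^m$ vanishing exactly for $m\in\{1,10,13\}$. Since $[q^m]\,q^2/(1-q)^2=m-1$, this is precisely the estimate $[q^m]\sum_{n\ge1}R_n(q)\le m-1$. Because $R_n$ has lowest degree $2n^2+2n$, only $R_1(q)=q^4(1-q^3-q^4)/[(1-q)(1-q^3)]^2$ contributes for $m\le 11$; I would handle that range by a direct expansion (this is where the zeros at $m=1,10$ should appear), then bound the tail $\sum_{n\ge2}R_n$ crudely on $m\ge12$ and examine the overlap region carefully (where the zero at $m=13$ should appear). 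A cleaner alternative is a combinatorial injection: via the five‑piece dissection of Theorem~\ref{thm:bOE-srank-gf}, $p_0(N)$ counts the diagonal configurations $(m=n)$ and $p_1(N)$ the once off‑diagonal ones $(m=n+1)$ at fixed size $2N-1$, so one seeks an injection $\bfOE_1\cap\{|\cdot|=2N-1\}\hookrightarrow\bfOE_0\cap\{|\cdot|=2N-1\}$ that rebalances $\cE(\lambda)$ and $\cE(\lambda')$ while preserving the weight.

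For part (i) the most promising route is a profile‑word injection. By the characterizations recorded in Section~\ref{sec:pre} and subsection~\ref{subsec:bfOE}, a word $w_\lambda=(e_1,n_1,\ldots,e_k,n_k)$ represents an element of $\bfEO$ iff it has a single consecutive odd pair $(n_i,e_{i+1})$ (allowing the boundary convention $n_0=e_{k+1}=1$) with all other letters even, whereas it represents an element of $\bfOE$ iff $e_1$ and $n_k$ are odd and there is in addition exactly one internal consecutive odd pair. I would construct $\Phi:\bfEO_{2n}\hookrightarrow\bfOE_{2n+1}$ by ``activating'' the two boundary letters $e_1,n_k$ to odd and inserting a single cell to correct the area by one, producing exactly the required $\bfOE$ profile shape; injectivity should follow from reversibility of the activation, and strict inequality from exhibiting an explicit one‑parameter family of $\bfOE_{2n+1}$ words outside the image. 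Should the injection resist being made clean, the fallback is analytic: $\log F(e^{-t})\sim \pi^2/(24t)$ yields $\overline{\mathcal{EO}}(2n)\sim\exp\!\bigl(\pi\sqrt{n/6}\bigr)$ up to polynomial factors, and one would extract the corresponding growth of $\bOE(2n+1)$ from the mock‑modular nature of $B(q)$ (through its link to $\omega$), show it dominates, and close the finitely many small cases by computation.

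The main obstacle in part (i) is the genuinely \emph{diagonal} double‑sum form of $B(q)$: the two legs $\alpha,\beta$ of the dissection are coupled only through the shared rectangle dimension, so there is no product form to compare against $F$, and both the injection (making the parities of $e_1,n_k$ and the single extra cell line up while staying injective) and the asymptotic comparison (mock‑modular growth of $\bOE$ versus the eta‑quotient growth of $\bEO$, where the constants must be controlled sharply) are delicate. In part (ii) the crux is instead the \emph{exact} exceptional set $\{1,10,13\}$: the inequality $D(q)\ge0$ is tight at three isolated points, which signals that any bound must be essentially sharp there, so either the injection has to fail surjectivity at precisely these sizes or the estimate $[q^m]\sum_n R_n\le m-1$ must be pushed to equality only for these $m$, and it is this rigidity, rather than the mere nonnegativity, that I expect to be the hardest part.
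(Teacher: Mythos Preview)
The statement you are attempting to prove is listed in the paper as a \emph{conjecture} in the final section, not as a theorem; the paper offers no proof, only the statement itself as motivation for further work. So there is no ``paper's own proof'' against which to compare your attempt.

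That said, what you have written is not a proof but a strategic outline, and you are candid about this (``I would handle that range by a direct expansion'', ``one seeks an injection'', ``should the injection resist being made clean, the fallback is analytic''). Your algebraic reduction in part~(ii) is correct: the identity $S_n-T_{n+1}=R_n$ checks out, and $D(q)=q^2/(1-q)^2-\sum_{n\ge1}R_n(q)$ is a clean reformulation. But the remaining step---bounding $[q^m]\sum_n R_n(q)$ by $m-1$ with equality exactly at $m\in\{1,10,13\}$---is the entire content of the conjecture, and you have not carried it out; you only note that it ``should'' work after a finite check and a tail bound. The factor $1-q^{2n+1}-q^{2n+2}$ gives each $R_n$ coefficients of both signs, so a ``crude'' tail bound is not obviously available, and the tightness at three isolated points (which you correctly flag) means any slack in the estimate will fail.

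For part~(i), your injection sketch is too vague to assess: ``activating'' $e_1,n_k$ to odd parity and ``inserting a single cell to correct the area'' does not specify a map, and the area of a partition is a nonlinear function of its profile word, so it is not clear such a correction can be made injectively or even consistently. The asymptotic fallback is more principled in spirit, but you would need actual constants in the exponents (not just ``$\sim$'' up to polynomial factors) to conclude a strict inequality, and extracting these from the mock-modular side for $\bOE$ is itself nontrivial. In short: your reformulations are sound and your instincts about where the difficulty lies are accurate, but nothing here constitutes a proof of either part, which is consistent with the paper's own assessment that these remain open.
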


%%%%%%%%%%%%%%%%%%%%%%%%
\section*{Acknowledgements}
%%%%%%%%%%%%%%%%%%%%%%%%

Shishuo Fu was partially supported by the National Natural Science Foundation of China (No. 12171059), the Natural
Science Foundation Project of Chongqing (No. cstc2021jcyj-msxmX0693), and the Mathematical Research Center of Chongqing 
University. Dazhao Tang was partially supported by the National Natural Science Foundation of China (No. 12201093), the 
Natural Science Foundation Project of Chongqing CSTB (No. CSTB2022NS\\CQ--MSX0387), the Science and Technology Research 
Program of Chongqing Municipal Education Commission (No. KJQN202200509), and the Doctoral start-up research grant 
(No. 21XLB038) of Chongqing Normal University.

\end{document}